\def\cleardoublepage{\clearpage\if@twoside \ifodd\c@page\else%
         \hbox{}%
     \thispagestyle{empty}
     \newpage%
     \if@twocolumn\hbox{}\newpage\fi\fi\fi}
\theoremstyle{plain}
\newtheorem{theorem}{Theorem}[section]
\newtheorem{definition}[theorem]{Definition}
\newtheorem{lemma}[theorem]{Lemma}
\newtheorem{proposition}[theorem]{Proposition}
\newtheorem{remark}[theorem]{Remark}
\numberwithin{equation}{section}
\theoremstyle{definition}
\newcommand{\R}{\ensuremath{\mathbb{R}}}
\begin{document}

\title[]{Blow-up and global existence for \\semilinear parabolic equations \\ on infinite graphs}

\author{Gabriele Grillo}
\address{\hbox{\parbox{5.7in}{\medskip\noindent{Dipartimento di Matematica,\\
Politecnico di Milano,\\
   Piazza Leonardo da Vinci 32, 20133 Milano, Italy.
   \\[3pt]
        \em{E-mail address: }{\tt
          gabriele.grillo@polimi.it
          }}}}}

\author{Giulia Meglioli}
\address{\hbox{\parbox{5.7in}{\medskip\noindent{Fakult\"at f\"ur Mathematik,\\
Universit\"at Bielefeld,\\
   33501, Bielefeld, Germany.
   \\[3pt]
        \em{E-mail address: }{\tt
          gmeglioli@math.uni-bielefeld.de
          }}}}}

\author{Fabio Punzo}
\address{\hbox{\parbox{5.7in}{\medskip\noindent{Dipartimento di Matematica,\\
Politecnico di Milano,\\
   Piazza Leonardo da Vinci 32, 20133 Milano, Italy. \\[3pt]
        \em{E-mail address: }{\tt
          fabio.punzo@polimi.it}}}}}

\keywords{Semilinear parabolic equations, infinite graphs, blow-up, global existence, heat kernel}

\subjclass[2020]{35A01, 35A02, 35B44, 35K05, 35K58, 35R02.}

\maketitle

\maketitle              

\begin{abstract} We investigate existence of global in time solutions versus blow-up ones for the semilinear heat equation posed on infinite graphs. The source term is a general function $f(u)$, and the different behaviour of solutions is characterized by the behaviour of $f$ near the origin and by the first eigenvalue $\lambda_1(G)$ of the negative Laplacian on the graph, which is assumed to satisfy $\lambda_1(G)>0$. In particular, if $f'(0)>\lambda_1(G)$ than all positive nontrivial solution blows up in finite time, whereas if $f'(0)<\lambda_1(G)$, or if a weaker condition involving the Lipschitz constant of $f$ in a neighborhood of the origin holds, then there exist global in time, bounded solutions.
\end{abstract}

\bigskip
\bigskip

\section{Introduction}
We shall deal with global existence and finite time blow-up of solutions to semilinear parabolic problems of the following form:
\begin{equation}\label{problema}
\begin{cases}
\, u_t= \Delta u +\,f(u) & \text{in}\,\, G\times (0,T) \\
\,\; u =u_0&\text{in}\,\, G\times \{0\}\,,
\end{cases}
\end{equation}
where $(G, \omega, \mu)$ is an {\it infinite weighted} graph with {\it edge-weight} $\omega$ and {\it node (or vertex) measure} $\mu$, $\Delta$  denotes the Laplace operator on $G$, 
$T\in (0,\infty]$, $f:[0,+\infty)\to [0,+\infty)$ is a locally Lipschitz, increasing function, $u_0:G\to [0, +\infty)$ is a given initial datum. 


In last years, the study of elliptic and parabolic equations on graphs, both finite and infinite, is receiving an increasing interest from various authors (see, e.g., \cite{Grig2, KLW, Mu2}). In particular, let us mention the contributions \cite{AS1, AS2, BMP, HJ, HK, MP2, MoPuSo} for elliptic equations, and \cite{BCG, CGZ, EM, F, GT, HMu, Huang, HuangKS, LSZ, LW2, LWu, MoPuSo2, Mu, Mu2, Wu} for parabolic equations.
On the other hand, there is a huge literature concerning the question of existence and nonexistence of global solutions to problem \eqref{problema} when the problem is posed in the Euclidean space or on Riemannian manifolds, mainly with $f(u)=u^p, p>1$. In $\mathbb R^N$ the celebrated so-called Fujita phenomenon has been discovered in \cite{F} (see also \cite{H} and \cite{KST} for the critical case); a complete account of results for semilinear parabolic equations posed in $\mathbb R^N$ can be found, e.g., in \cite{BB}, \cite{DL}, \cite{Levine} and in references therein.
On Riemannian manifolds, qualitatively different results hold; some of them can be found e.g. in \cite{BPT, MeGP1, MeGP2, MeGP3, MeGP4, GMPu, Sun1, MaMoPu, Punzo, Pu22, Zhang}.

Let us now recall some results regarding problem \eqref{problema} posed on graphs. In \cite{LSZ} the authors address the problem of blow up of nonnegative solutions to semilinear heat equation with convex source, posed in measure metric spaces, including in particular the case of weighted infinite graphs, i.e. problem \eqref{problema}. More precisely, in \cite[Theorem 4.1]{LSZ} nonexistence of global mild solutions to problem \eqref{problema} is proved under the following assumption:
$$
\frac{1}{\mu(D)}\sum_{x\in D} \left(\sum_{y\in D} p(x,y,T)u_0(y)\right)\mu(x)\,\ge\, F^{-1}(T),
$$
for some subset $D\subset G$ and $T>0$, where
$$
F(t):=\int_t^{+\infty}\frac1{f(s)}\,ds\,,
$$
$p$ is the heat kernel associated to $\Delta$ and $\mu$ is the \it node measure\rm, see Section \ref{prel} below for a full explanation.

Moreover, as a consequence, in \cite[Theorem 5.7]{LSZ} the authors show nonexistence of global mild solutions assuming that
$$
\begin{aligned}
&\inf_{x\in G}\mu(x)>0,\\
&F\left(\frac1t\right)\le Kt^\gamma\quad\text{for some}\,\,K>0,\,\gamma>0,\\
&\mu(B_R(x_0)\le CR^\theta\quad \text{for all R>1}, \text{for some}\,\,x_0\in G,\,C>0,\,\theta>0,
\end{aligned}
$$
where
$B_R(x_0):=\{x\in G\,:\, d(x, x_0)<R\}$ and $d$ is a distance on $G$. A similar result was also obtained in \cite[Theorem 1.1]{LWu}.

By completely different methods, in \cite{MoPuSo} it is shown  that \eqref{problema} with $f(u)=u^p$ admits no nontrivial nonnegative solutions, assuming an upper bound on the Laplacian of the distance, and an appropriate space-time volume growth condition. Furthermore, in \cite[Theorem 1.2]{LWu} global existence is addressed, but only when the problem is posed on a subgraph $\Omega\subset G$ with $\Omega^c\not=\emptyset$.

\smallskip

In this paper our goal is to extend to the case of an infinite graph the results obtained in \cite{MeGP4} on Riemannian manifolds. To this end, we shall in some case assume that $G$ is stochastically complete, i.e. that the heat semigroup preserves probability, namely:
\[
\sum_{y\in D} p(x,y,t)=1\ \text{for all }(x,t)\in G\times(0,+\infty),
\] and that
$$\lambda_1(G):= \inf \operatorname{spec}(-\Delta)>0\,.$$
We now describe our results making for the moment, for the sake of simplicity, extra hypotheses on $f$ (for the precise formulation, see Theorems \ref{teo1}, \ref{teo2}, \ref{teo3} below). If $f:[0, +\infty)\to [0, +\infty)$ is increasing and convex in $[0,+\infty)$, $f(0)=0$,
\begin{equation*}\label{N5}
\int^{+\infty}_1\frac{1}{f(s)}\,ds\,<+\infty,
\end{equation*}
and $$f'(0)>\lambda_1(G),$$
then any solution to problem \eqref{problema} blows up in finite time, provided $u_0\not\equiv 0$. On the other hand, for any given $\delta>0$, let $L\equiv L(f, \delta)$ denote the Lipschitz constant of $f$ over the interval $[0, \delta]$. If $f:[0, +\infty)\to [0, +\infty)$ is increasing in $[0,+\infty), f(0)=0$,
$$L\leq \lambda_1(G)$$ for a suitable $\delta>0$, and $u_0$ is small enough in $\ell^\infty(G)$, then
\eqref{problema} admits a global in time solution belonging to $L^{\infty}((0,\infty);\ell^{\infty}(G))$.
Obviously, if $f\in C^1([0, +\infty)),$ then the condition $L\leq \lambda_1(G)$ is automatically satisfied whenever $$f'(0)<\lambda_1(G).$$

Note that the proof of the blow-up follows in part the same line of arguments as in \cite{MeGP4}; however, in \cite{MeGP4} it is used in a crucial way that \eqref{eq26}, a crucial property of the long-time behaviour of the heat kernel, holds locally uniformly. This is not known to hold on graphs; however in the present proof we show how the validity of such property can be circumvented. On the contrary, the proof of global existence is entirely different from the one given in \cite{MeGP4}. In fact, in \cite{MeGP4} a barrier argument is used, combined with a priori estimates and compactness arguments that are not known to hold on graphs. Here, we make use of a fixed point argument to obtain the existence of a mild solution, when $f'(0)<\lambda_1(G)$. This does not work directly also when $f'(0)=\lambda_1(G)$. In fact, in that case, we construct approximate solutions on balls of radius $R$, again by means of a fixed point theorem. Then we can pass to the limit as the radius goes to $+\infty$, using an a priori bound, which is in turn obtained by virtue of the existence of a suitable supersolution, to be  constructed.

The paper is organized as follows. In section \ref{prel} we concisely summarize some mathematical background concerning graphs; in Section \ref{statements} we state our main results. Section \ref{blowup} is devoted to the proof of the blow-up result. In Section \ref{existence} we prove the global existence theorem when $f'(0)<\lambda_1(G)$; finally, the critical case $f'(0)=\lambda_1(G)$ is dealt with in Section \ref{proofcritical}.

\section{Mathematical background}\label{prel}

Let $G$ be a countably infinite set and , and
let $\mu:G\to (0,+\infty)$ be a measure on $G$
satisfying $\mu(\{x\}) <+\infty$ for every $x\in G$ (so that $(G,\mu)$ becomes a measure space). Furthermore, let
\begin{equation*}
\omega:G\times G\to [0,+\infty)
\end{equation*}
be a symmetric, with zero diagonal and finite sum function, i.e.
\begin{equation}\label{omega}
\begin{aligned}
&\text{(i)}\,\, \omega(x,y)=\omega(y,x)\quad &\text{for all}\,\,\, (x,y)\in G\times G;\\
&\text{(ii)}\,\, \omega(x,x)=0 \quad\quad\quad\,\, &\text{for all}\,\,\, x\in G;\\
&\text{(iii)}\,\, \displaystyle \sum_{y\in G} \omega(x,y)<\infty \quad &\text{for all}\,\,\, x\in G\,.
\end{aligned}
\end{equation}
Thus, we define  \textit{weighted graph} the triplet $(G,\omega,\mu)$, where $\omega$ and $\mu$ are the so called \textit{edge weight} and \textit{node measure}, respectively. Observe that assumption $(ii)$ corresponds to require that $G$ has no loops.
\smallskip

\noindent Let $x,y$ be two points in $G$; we say that
\begin{itemize}
\item $x$ is {\it connected} to $y$, and we write $x\sim y$, whenever $\omega(x,y)>0$;
\item the couple $(x,y)$ is an {\it edge} of the graph and the vertices $x,y$ are called the {\it endpoints} of the edge whenever $x\sim y$;
\item a collection of vertices $ \{x_k\}_{k=0}^n\subset G$ is a {\it path} if $x_k\sim x_{k+1}$ for all $k=0, \ldots, n-1.$
\end{itemize}

\noindent We say that the weighted graph $(G,\omega,\mu)$ is
\begin{itemize}
\item[(i)] {\em locally finite} if each vertex $x\in G$ has only finitely many $y\in G$ such that $x\sim y$;
\item[(ii)] {\em connected} if, for any two distinct vertices $x,y\in G$ there exists a path joining $x$ to $y$;
\item[(iii)] {\em undirected} if its edges do not have an orientation.
\end{itemize}
We shall always assume in the rest of the paper that the previous properties are fulfilled.


\medskip

Let $\mathfrak F$ denote the set of all functions $f: G\to \mathbb R$. For any $f\in \mathfrak F$ and for all $x,y\in G$, let us give the following
\begin{definition}\label{def1}
Let $(G, \omega,\mu)$ be a weighted graph. For any $f\in \mathfrak F$:
\begin{itemize}
\item the {\em difference operator} is
\begin{equation}\label{e2f}
\nabla_{xy} f:= f(y)-f(x)\,;
\end{equation}
\item the {\em (weighted) Laplace operator} on $(G, \omega, \mu)$ is
\begin{equation*}
\Delta f(x):=\frac{1}{\mu(x)}\sum_{y\in G}[f(y)-f(x)]\omega(x,y)\quad \text{ for all }\, x\in G\,.
\end{equation*}
\end{itemize}
\end{definition}
\noindent Clearly,
\[\Delta f(x)=\frac 1{\mu(x)}\sum_{y\in G}(\nabla_{xy} f)\omega(x,y)\quad \text{ for all } x\in G\,.\]

We set
$$\ell^\infty(G):=\{u\in \mathfrak F\,:\, \sup_{x\in G}|u(x)| <+\infty\}.
$$

\subsection{The heat semigroup on $G$}\label{heats}
We need to recall some preliminaries concerning the heat semigroup on $G$. Let $(G,\omega,\mu)$ be a weighted infinite graph.
Let $\{e^{t\Delta}\}_{t\ge 0}$ be the heat semigroup of $G$. It admits a (minimal) \textit{heat kernel}, namely a function $p:G\times G\times (0,+\infty)\to\R$, $p>0$ in $G\times G\times (0,+\infty)$ such that
\begin{equation*}
(e^{t\Delta} u_0)(x)=\sum_{y\in G}p(x,y,t)\,u_0(y)\,\mu(y), \quad x\in G,\,\, t>0,
\end{equation*}
for any $u_0\in \ell_\infty$. It is well known that
\begin{equation}\label{eq23}
\sum_{y\in G} p(x,y,t)\,\mu(y)\,\le \,1,\quad \text{for all}\,\,\, x\in G,\,\, t>0.
\end{equation}
We say that a graph $G$ is {\it stochastically} complete if the following condition holds:
\begin{equation*}
\sum_{y\in G} p(x,y,t) \,\mu(y)\,= \,1,\quad \text{for all}\,\,\, x\in G,\,\, t>0.
\end{equation*}
The main properties of $p(x,y,t)$ are stated below.
\begin{proposition}\label{prop1}
Let $(G,\omega,\mu)$ be a weighted infinite graph. Then the corresponding heat kernel satisfies the following properties:
\begin{itemize}
\item symmetry: $p(x,y,t)\equiv p(y,x,t)$ for all $x,y\in G$ and $t>0$.
\item $p(x,y,t)\ge 0$ for all $x,y\in G$ and $t>0$ and
\begin{equation}\label{eq23}
\sum_{y\in G} p(x,y,t) \,\mu(y)\,\le \,1,\quad \text{for all}\,\,\, x\in G,\,\, t>0.
\end{equation}
\item semigroup identity: for all $x,y\in G$ and $t,s>0$,
\begin{equation}\label{eq24}
p(x,y,t+s)=\sum_{z\in G} p(x,z,t)\, p(z,y,s) \,\mu(z).
\end{equation}
\end{itemize}
\end{proposition}

The next result is contained in \cite[Proposition 4.5]{KLW1}.
\begin{proposition}\label{prop2}
Let $(G,\omega,\mu)$ be a weighted infinite graph. For all vertices $x$ and $y$
\begin{equation}\label{eq26}
\lim_{t\to+\infty} \frac{\log p(x,y,t)}{t}\,=\, -\lambda_1(G)
\end{equation}
where $\lambda_1(G)$ is the infimum of the spectrum of the operator $-\Delta$.
\end{proposition}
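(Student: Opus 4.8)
The plan is to reduce everything to the on-diagonal behaviour and to analyse the latter through the spectral theorem. Throughout write $\delta_x\in\mathfrak F$ for the indicator of the vertex $x$ and $\langle f,g\rangle:=\sum_{z\in G}f(z)g(z)\mu(z)$ for the inner product of $\ell^2(G,\mu)$; note that $\{\mu(x)^{-1/2}\delta_x\}_{x\in G}$ is an orthonormal basis. A direct computation from the definition of the kernel gives $\langle e^{t\Delta}\delta_y,\delta_x\rangle=\mu(x)\mu(y)\,p(x,y,t)$, so that $p$ is, up to normalisation, the matrix of $e^{t\Delta}$ in this basis.

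First I would record two elementary consequences of the semigroup identity \eqref{eq24}, of the symmetry of $p$, and of its strict positivity (the latter holds on a connected graph). Keeping only the diagonal term $z=w=y$ in the iterated identity yields
\[ p(x,x,t+2s)\ \ge\ \mu(y)^2\,p(x,y,s)^2\,p(y,y,t)\qquad(x,y\in G,\ s,t>0), \]
while the Cauchy--Schwarz inequality applied to \eqref{eq24} gives $p(x,y,2t)\le\sqrt{p(x,x,2t)\,p(y,y,2t)}$, and the one-step identity gives $p(x,y,t+s)\ge\mu(x)\,p(x,x,t)\,p(x,y,s)$. Setting $\ell(x):=\lim_{t\to\infty}t^{-1}\log p(x,x,t)$ (once it is shown to exist), the first inequality forces the on-diagonal exponential rate to be independent of the base point, and the last two transfer both the upper and the lower bound from the diagonal to off-diagonal pairs. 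Hence it suffices to prove $\ell(x)=-\lambda_1(G)$ at one (equivalently every) vertex.

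For the upper bound I would invoke the spectral theorem for the nonnegative self-adjoint operator $-\Delta$ on $\ell^2(G,\mu)$, whose spectrum lies in $[\lambda_1(G),\infty)$. Writing $e^{t\Delta}=\int_{[\lambda_1,\infty)}e^{-t\lambda}\,dE(\lambda)$ and $\mu_x:=\langle E(\cdot)\delta_x,\delta_x\rangle$, a positive measure of total mass $\mu(x)$, one gets
\[ p(x,x,t)=\frac{1}{\mu(x)^2}\int_{[\lambda_1,\infty)}e^{-t\lambda}\,d\mu_x(\lambda)\ \le\ \frac{1}{\mu(x)}\,e^{-t\lambda_1(G)}, \]
so $\limsup_{t\to\infty}t^{-1}\log p(x,x,t)\le-\lambda_1(G)$ at every vertex. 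For the matching lower bound I would use that $\lambda_1(G)=\inf\operatorname{spec}(-\Delta)$ implies $E\big([\lambda_1,\lambda_1+\varepsilon)\big)\neq0$ for every $\varepsilon>0$; since the $\delta_x$ are total, there is a vertex $x_\varepsilon$ with $\mu_{x_\varepsilon}\big([\lambda_1,\lambda_1+\varepsilon)\big)>0$, whence $p(x_\varepsilon,x_\varepsilon,t)\ge\mu(x_\varepsilon)^{-2}e^{-t(\lambda_1+\varepsilon)}\mu_{x_\varepsilon}([\lambda_1,\lambda_1+\varepsilon))$ and $\liminf_{t\to\infty}t^{-1}\log p(x_\varepsilon,x_\varepsilon,t)\ge-(\lambda_1+\varepsilon)$. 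By the base-point independence established above this liminf is the same at every vertex, so letting $\varepsilon\downarrow0$ gives the lower bound everywhere. Combining the two bounds shows the diagonal limit exists and equals $-\lambda_1(G)$, and the transfer inequalities then yield \eqref{eq26} for all $x,y$.

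The only genuinely delicate point is the lower bound: the spectral mass near the bottom of the spectrum need not a priori be seen by a prescribed vertex, so one cannot fix $x$ in advance. I resolve this by first proving, purely from the semigroup identity and connectedness, that the on-diagonal rate does not depend on the vertex; the existence of some vertex detecting spectrum in $[\lambda_1,\lambda_1+\varepsilon)$ — which is immediate from the totality of $\{\delta_x\}$ — then propagates to all vertices. The remaining steps are routine, modulo the standard identification of the minimal heat semigroup of the graph with $e^{t\Delta}$ for the self-adjoint operator $-\Delta$ underlying the definition of $\lambda_1(G)$.
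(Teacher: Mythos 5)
Your proposal is correct, but the comparison with ``the paper's proof'' is degenerate: the paper does not prove Proposition \ref{prop2} at all, it simply quotes it from \cite[Proposition~4.5]{KLW1}. What you have written is a self-contained version of the standard argument behind that cited result (going back to P.~Li's theorem on manifolds), and all the steps check out: the three kernel inequalities follow from the semigroup identity \eqref{eq24}, symmetry, and strict positivity of $p$ (which uses connectedness, as you note); the spectral theorem gives the on-diagonal upper bound $p(x,x,t)\le \mu(x)^{-1}e^{-\lambda_1(G)t}$; non-triviality of the spectral projection $E\big([\lambda_1,\lambda_1+\varepsilon)\big)$ (valid because the spectrum is closed and nonempty, so $\lambda_1(G)\in\operatorname{spec}(-\Delta)$) together with totality of the point masses $\delta_x$ produces the lower bound at some vertex $x_\varepsilon$; and your vertex-independence step correctly propagates it everywhere before letting $\varepsilon\downarrow 0$. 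One small remark: the existence of the limit $\ell(x)$, which you flag as ``once it is shown to exist,'' never needs a separate proof — your transfer inequality $p(x,x,t+2s)\ge \mu(y)^2p(x,y,s)^2p(y,y,t)$ shows that $\liminf$ and $\limsup$ of $t^{-1}\log p(\cdot,\cdot,t)$ are each vertex-independent, and your two spectral bounds then sandwich both between $-\lambda_1(G)$ and $-\lambda_1(G)$, so existence comes out of the argument for free (alternatively, supermultiplicativity of $t\mapsto \mu(x)p(x,x,t)$ plus Fekete's lemma gives it directly). The caveat you raise at the end — that the semigroup whose kernel is $p$ must be $e^{t\Delta}$ for the same self-adjoint realization of $\Delta$ that defines $\lambda_1(G)$ — is a genuine subtlety on infinite graphs (the minimal kernel corresponds to the Dirichlet/form realization), but the paper's own conventions already presuppose this identification, so taking it as given is fair. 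In terms of trade-offs: the paper's citation keeps its preliminaries short, while your argument makes the result self-contained and isolates exactly which structural inputs are used (self-adjointness, positivity, Chapman--Kolmogorov), which is arguably more transparent.
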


Moreover, by combining together \cite[Theorems 2.1, 2.2]{F}, it is also possible to state the following result:
\begin{proposition}\label{prop2bis}
Let $(G,\omega,\mu)$ be a weighted infinite graph. Let $x,y\in G$, for any $\underline t>0$ there exists $\underline C>0$ such that
\begin{equation}\label{e301}
p(x,y,t)\leq \underline C\, e^{-\lambda_1(G) t} \quad \text{for all}\,\,\, t\geq \underline t\,,
\end{equation}
where $\lambda_1(G)$ is the infimum of the spectrum of the operator $-\Delta$.
\end{proposition}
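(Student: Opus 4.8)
The plan is to obtain the sharp exponential rate $e^{-\lambda_1(G)t}$ directly from spectral theory, rather than from the pointwise asymptotics of Proposition~\ref{prop2}: the latter only yields, for every $\varepsilon>0$, a bound $p(x,y,t)\le e^{(-\lambda_1(G)+\varepsilon)t}$ for $t$ large, which falls short of the claimed rate. The operator $-\Delta$ is self-adjoint and nonnegative on $\ell^2(G,\mu)$, with $\operatorname{spec}(-\Delta)\subset[\lambda_1(G),+\infty)$ by the very definition of $\lambda_1(G)$. Hence, by the spectral theorem, $e^{t\Delta}=\int e^{-\lambda t}\,dE_\lambda$ with the integral supported on $[\lambda_1(G),+\infty)$, so that
\begin{equation*}
\langle e^{t\Delta}g,g\rangle_{\ell^2(G,\mu)}\le e^{-\lambda_1(G)t}\,\|g\|_{\ell^2(G,\mu)}^2\qquad\text{for all }g\in\ell^2(G,\mu),\ t>0.
\end{equation*}

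First I would record the representation of the kernel as a matrix element of the semigroup. Testing the identity $(e^{t\Delta}g)(x)=\sum_{y}p(x,y,t)g(y)\mu(y)$ against indicators, one finds
\begin{equation*}
p(x,y,t)=\frac{1}{\mu(x)\mu(y)}\,\langle e^{t\Delta}\mathbf 1_{\{y\}},\mathbf 1_{\{x\}}\rangle_{\ell^2(G,\mu)}.
\end{equation*}
Taking $x=y$ and $g=\mathbf 1_{\{x\}}$ (so that $\|g\|_{\ell^2(G,\mu)}^2=\mu(x)$) in the spectral bound gives the diagonal estimate
\begin{equation*}
p(x,x,t)\le \frac{1}{\mu(x)}\,e^{-\lambda_1(G)t}\qquad\text{for all }t>0.
\end{equation*}

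Second, I would reduce the off-diagonal bound to the diagonal one. By the semigroup identity \eqref{eq24} and the symmetry of $p$,
\begin{equation*}
p(x,y,t)=\sum_{z\in G}p(x,z,t/2)\,p(z,y,t/2)\,\mu(z),
\end{equation*}
and the Cauchy--Schwarz inequality in $\ell^2(G,\mu)$, together with the identity $\sum_{z}p(x,z,t/2)^2\mu(z)=p(x,x,t)$ (again a consequence of \eqref{eq24} and symmetry), yields $p(x,y,t)\le\sqrt{p(x,x,t)\,p(y,y,t)}$. Combining with the diagonal estimate gives
\begin{equation*}
p(x,y,t)\le \frac{1}{\sqrt{\mu(x)\mu(y)}}\,e^{-\lambda_1(G)t}\qquad\text{for all }t>0,
\end{equation*}
which is exactly \eqref{e301} with $\underline C=1/\sqrt{\mu(x)\mu(y)}$, in fact uniformly for all $t>0$ and not merely for $t\ge\underline t$.

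The only genuinely delicate point, and the one I would treat most carefully, is the spectral-theoretic set-up on an infinite graph: one must ensure that the minimal heat kernel $p$ is the integral kernel of the semigroup generated by the self-adjoint realization of $-\Delta$ whose spectral infimum is precisely $\lambda_1(G)$, so that the bound $e^{-\lambda t}\le e^{-\lambda_1(G)t}$ is applicable on the whole support of the spectral measure. Once this identification is in hand, every remaining step is routine, and the $\underline t$-dependent formulation in the statement is recovered (indeed improved) at no extra cost.
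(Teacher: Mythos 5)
Your proof is correct, but it takes a genuinely different route from the paper: the paper does not actually prove Proposition \ref{prop2bis}, it simply asserts it ``by combining together'' two theorems cited from the literature (and the citation even appears to be misdirected, since \cite{F} resolves in the bibliography to Fujita's 1966 blow-up paper in $\mathbb{R}^N$, which contains no graph heat-kernel estimates). Your argument, by contrast, is self-contained and every step checks out: the spectral bound $\langle e^{t\Delta}g,g\rangle_{\ell^2(G,\mu)}\le e^{-\lambda_1(G)t}\|g\|_{\ell^2(G,\mu)}^2$, the identification $p(x,y,t)\,\mu(x)\mu(y)=\langle e^{t\Delta}\mathbf{1}_{\{y\}},\mathbf{1}_{\{x\}}\rangle_{\ell^2(G,\mu)}$ (legitimate since each $\mathbf{1}_{\{x\}}\in\ell^2(G,\mu)$ because $\mu(x)<\infty$), and the reduction of the off-diagonal bound to the diagonal one via the semigroup identity \eqref{eq24}, symmetry, and Cauchy--Schwarz are all valid; you also correctly observe that Proposition \ref{prop2} alone could not give the result, since \eqref{eq26} only yields the rate $e^{(-\lambda_1(G)+\varepsilon)t}$. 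The resulting estimate $p(x,y,t)\le\bigl(\mu(x)\mu(y)\bigr)^{-1/2}e^{-\lambda_1(G)t}$ is in fact stronger than \eqref{e301}: it holds for all $t>0$ with the explicit constant $\underline C=\bigl(\mu(x)\mu(y)\bigr)^{-1/2}$, making the threshold $\underline t$ superfluous. The one point you rightly flag as delicate --- that the minimal heat kernel $p$ is the kernel of the semigroup generated by the self-adjoint (Dirichlet-form) realization of $\Delta$ whose spectral bottom is $\lambda_1(G)$ --- is standard in the framework the paper works in (it is exactly the setting of \cite{KLW1}, which the paper itself invokes for Proposition \ref{prop2}), so no gap remains. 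What the paper's approach buys is brevity by deferring to the literature; what yours buys is a transparent, quantitative proof with an explicit constant that also repairs the dangling citation.
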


\section{Statements of main results}\label{statements}
Solutions to problem \eqref{problema} are meant in the {\it mild} sense, according to the next definition. Here and hereafter $p:G\times G\times (0, +\infty)\to (0, +\infty)$ stands again for the heat kernel on $G$ discussed in the previous section.
\begin{definition}\label{defsol}
A function $u:G\times(0,\tau)\to\R$, $u\in L^\infty((0,\tau),\ell^\infty(G))$ is a \textit{mild} solution of problem \eqref{problema} if
\begin{equation}\label{solmild}
u(x,t)=\sum_{y\in G}p(x,y,t)u_0(y)+\int_0^t\sum_{y\in G} p(x,y,t-s)f(u(y,s))\,ds\,,
\end{equation}
for every $x\in G$ and $t\in(0,\tau)$. If $u$ is a mild solution of \eqref{problema} in $G\times(0,+\infty)$ then we call it a global solution.
\end{definition}

We say that a solution {\em blows up in finite time}, whenever there exists $\tau>0$ such that
$$\lim_{t\to \tau^-}\|u(t)\|_{\ell^\infty(G)}=+\infty\,.$$

\medskip

Our first result is concerned with nonexistence of global solutions.

\begin{theorem}\label{teo1}
Let $(G,\omega,\mu)$ be a weighted, stochastically complete, infinite graph with $\lambda_1(G)>0$. Let $u_0\in\ell^\infty(G)$, $u_0\ge 0$, $u_0\not\equiv 0$ in $G$. Let $f$ be locally Lipschitz in $[0, +\infty)$. Assume that $f\ge h$ where $h$ is increasing and convex in $[0,+\infty)$ and $h(0)=0$. Moreover, suppose that
\begin{equation}\label{N5}
\int^{+\infty}_1\frac{1}{h(s)}\,ds\,<+\infty,
\end{equation}
and finally that $h'(0)>\lambda_1(G)$.
Then any solution to problem \eqref{problema} blows up in finite time.
\end{theorem}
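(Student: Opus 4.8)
The plan is to prove blow-up by comparing the solution from below with a spatially-profiled, time-growing subsolution built from the spectral data of the graph, and to detect blow-up through an ODE whose solutions cease to exist in finite time. Throughout I use that $f \ge h$ with $h$ convex, increasing, $h(0)=0$, satisfying the integrability condition \eqref{N5} and $h'(0) > \lambda_1(G)$; since by comparison any solution dominates the one with source $h$, it suffices to force blow-up for the equation with source $h$.

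The first step is to extract from the spectral hypothesis a suitable positive test function. Since $\lambda_1(G) = \inf\operatorname{spec}(-\Delta) > 0$, I would fix $\lambda$ with $\lambda_1(G) < \lambda < h'(0)$ and produce a nonnegative, nontrivial function $\varphi \in \ell^1(G,\mu) \cap \ell^2(G,\mu)$ (or at least summable against $u_0$) satisfying a differential inequality of the form $-\Delta\varphi \le \lambda\,\varphi$ on a large finite subgraph, using the variational characterization of $\lambda_1(G)$ to find an approximate eigenfunction with Rayleigh quotient below $\lambda$. The natural quantity to track is then the weighted average
\[
J(t) := \sum_{x\in G} u(x,t)\,\varphi(x)\,\mu(x),
\]
which is finite and positive for $t>0$ because $u_0 \ge 0$, $u_0 \not\equiv 0$, and the heat kernel is strictly positive on a connected graph.

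The second step is the differential inequality for $J$. Differentiating through the mild formulation \eqref{solmild} (or working with the semigroup generator after justifying the manipulations via stochastic completeness, which guarantees $\sum_y p(x,y,t)=1$ and hence no mass loss), I expect
\[
J'(t) = \sum_x \big(\Delta u + h(u)\big)\varphi\,\mu
      = \sum_x u\,(\Delta\varphi)\,\mu + \sum_x h(u)\,\varphi\,\mu
      \ge -\lambda\,J(t) + \sum_x h(u)\,\varphi\,\mu.
\]
Here I use self-adjointness of $\Delta$ to move it onto $\varphi$, then the test-function inequality $\Delta\varphi \ge -\lambda\varphi$. Normalizing $\varphi$ so that $\sum_x \varphi\,\mu = 1$ and invoking Jensen's inequality for the convex function $h$ gives $\sum_x h(u)\varphi\,\mu \ge h\!\big(\sum_x u\,\varphi\,\mu\big) = h(J)$, yielding the closed scalar inequality
\[
J'(t) \ge -\lambda\,J(t) + h\big(J(t)\big).
\]
Because $h'(0) > \lambda$, the right-hand side $G(J) := h(J) - \lambda J$ is strictly positive for small $J>0$ (its derivative at $0$ is $h'(0)-\lambda>0$), so $J$ is forced to increase away from $0$ and cannot return; convexity of $h$ then keeps $G(J)>0$ for all larger $J$ as well. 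Finally, the integrability condition \eqref{N5} ensures $\int^{+\infty} ds/G(s) < +\infty$, so the comparison ODE $\dot z = G(z)$ blows up in finite time, and $J(t) \to +\infty$ in finite time, which forces $\|u(t)\|_{\ell^\infty(G)} \to +\infty$ as well.

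The main obstacle, as the authors themselves flag, is the rigorous justification of the test-function step on an \emph{infinite} graph. On a manifold one would use that the approximate eigenfunction inequality holds locally uniformly, but that is exactly what is not available here. I would instead work on finite exhausting subgraphs $B_R$, build a compactly supported approximate eigenfunction $\varphi_R$ with Rayleigh quotient below $\lambda$, run the above computation against $\varphi_R$ — carefully controlling the boundary terms coming from edges leaving $B_R$, which is where stochastic completeness and the structure of the graph Laplacian must be used to show these terms do not spoil the sign — and then obtain a uniform-in-$R$ blow-up time. The delicate points are ensuring summability so that all the interchanges of summation and differentiation are legitimate, confirming that $J(t)$ is finite and positive for the mild solution (not merely formally), and verifying that the Jensen step survives the finite truncation. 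Once the differential inequality $J' \ge G(J)$ is secured with $G>0$ and $\int^\infty dG/G<\infty$, the finite-time blow-up conclusion is immediate.
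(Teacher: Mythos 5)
The decisive step in your plan, the inequality $J'(t)\ge -\lambda J(t)+h(J(t))$, requires the \emph{pointwise} inequality $-\Delta\varphi\le\lambda\,\varphi$ on $G$: in the sum $\sum_x u\,(\Delta\varphi)\,\mu$ the function $\Delta\varphi$ is paired against the arbitrary nonnegative profile $u(\cdot,t)$, not against $\varphi$ itself. What the variational characterization of $\lambda_1(G)$ produces is only the quadratic-form statement $\sum_x\varphi(-\Delta\varphi)\,\mu\le\lambda\sum_x\varphi^2\,\mu$ for some finitely supported $\varphi\ge0$; this does not imply the pointwise bound, and a function with small Rayleigh quotient may perfectly well have $-\Delta\varphi(x)>\lambda\varphi(x)$ at some vertices, in which case nothing prevents $\sum_x u(\Delta\varphi+\lambda\varphi)\mu<0$ for a solution concentrated near those vertices. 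So the passage from ``approximate eigenfunction with Rayleigh quotient below $\lambda$'' to ``$-\Delta\varphi\le\lambda\varphi$ on a large finite subgraph'' is a non sequitur, and it occurs exactly at the point you flag but do not resolve (the ``boundary terms''). As written, the differential inequality for $J$ is not established, so this is a genuine gap.

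The gap can be closed, but by a different object than the one you invoke: take $R$ so large that the bottom \emph{Dirichlet} eigenvalue $\lambda_R$ of the finite ball $B_R$ satisfies $\lambda_R<h'(0)$ (possible since $\lambda_R$ decreases to $\lambda_1(G)$ when, as in the paper's reference \cite{KLW1}, $\lambda_1(G)$ is the bottom of the spectrum of the Dirichlet realization of $-\Delta$), and let $\varphi$ be the associated Perron--Frobenius ground state, positive on $B_R$, extended by zero and normalized so that $\varphi\mu$ is a probability measure. Then $-\Delta\varphi=\lambda_R\varphi$ inside $B_R$, while at vertices outside $B_R$ one has $\varphi=0$ and $\Delta\varphi\ge0$, so the pointwise inequality holds on all of $G$ with the boundary contribution carrying the favorable sign; all sums are then finite, Green's formula and Jensen's inequality are immediate, and your ODE argument (via $h(s)\ge h'(0)s$ and \eqref{N5}) closes --- note, though, that you must additionally either justify that the mild solution \eqref{solmild} is differentiable in time at each vertex, or run the computation through Duhamel's formula and the positivity-preserving bound $e^{t\Delta}\varphi\ge e^{-\lambda_R t}\varphi$. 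This repaired route is genuinely different from the paper's proof, which avoids eigenfunctions altogether: there the weight is the backward heat kernel $p(x,\cdot,T-t)\mu(\cdot)$, so the Laplacian terms cancel exactly through the semigroup identity, Jensen's inequality applies because stochastic completeness makes this weight a probability measure, and the hypothesis $h'(0)>\lambda_1(G)$ enters only at the end, through the heat-kernel decay \eqref{eq26}, by comparing the upper bound $\Phi_x(0)\le Ce^{-h'(0)T}$ of Lemma \ref{lemma2} with the lower bound $\Phi_x(0)\ge C_1e^{-(\lambda_1(G)+\varepsilon)T}$ of Lemma \ref{lemma1}; in particular the paper never differentiates $u$ itself, only the scalar function $\Phi_x$, so no mild-to-strong upgrade is needed there.
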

Note that in the above theorem the fact that $h$ is assumed to be increasing and convex implies the existence of $h'(0)$.


Concerning existence of global in time sulutions, we shall prove the following result.
\begin{theorem}\label{teo2}
Let $(G,\omega,\mu)$ be a weighted, infinite graph with $\lambda_1(G)>0$. Let $f:[0, +\infty)\to [0, +\infty)$ be a locally Lipschitz function. Assume that $f$ is increasing and $f(0)=0$. For a given $\delta>0$, let $L\equiv L(f, \delta)$ denotes the Lipschitz constant of $f$ over the interval $[0, \delta]$. 
Moreover, suppose that, for some $\delta>0$,
\begin{equation}\label{eq58}
L<{\lambda_1}(G).
\end{equation}
Furthermore, assume that $u_0\in \ell^\infty(G)$, $u_0\ge 0$ in $G$, and
\begin{equation}\label{eq12a}
\|u_0\|_{\ell^{\infty}(G)}\,\le\,\delta.
\end{equation}
Then there exists a global solution to problem \eqref{problema}; in addition, $u\in L^{\infty}((0,\infty);\ell^{\infty}(G))$.
\end{theorem}

\begin{remark}
(i) Note that if $f(u)=u^p$ with $p>1$, then \eqref{eq58} is fulfilled, for $\delta>0$ small enough. Hence for such $f(u)$, we have global existence of solutions for appropriate data.

(ii) If $f\in C^1([0, +\infty))$ and $f'(0)<\lambda_1(G)$, then \eqref{eq58} clearly holds for $\delta>0$ sufficiently small.
\end{remark}

In the following theorem, we address the case $L=\lambda_1(G)$.
\begin{theorem}\label{teo3}
Let $(G,\omega,\mu)$ be a weighted, infinite graph with $\lambda_1(G)>0$. Let $f:[0, +\infty)\to [0, +\infty)$ be a locally Lipschitz function. Assume that $f$ is increasing and $f(0)=0$. For a given $\delta>0$, let $L\equiv L(f, \delta)$ denote the Lipschitz constant of $f$ over the interval $[0, \delta]$. 
Moreover, suppose that
\begin{equation}\label{eq58bis}
L={\lambda_1}(G).
\end{equation}
Furthermore, assume that $u_0\in \ell^\infty(G)\cap\ell^1(G)$, $u_0\ge 0$ in $G$, and
\begin{equation}\label{eq12b}
\|u_0\|_{\ell^\infty(G)}\,\le\,\delta e^{-L\underline t},
\end{equation}
\begin{equation}\label{eq12c}
\|u_0\|_{\ell^1(G)}\,\le\,\frac{\delta}{\underline C},
\end{equation}
for suitable positive constants $\underline C$ and $\underline t$ (which are the ones appearing in \eqref{e301} above).
Then there exists a global solution to problem \eqref{problema}; in addition, $u\in L^{\infty}((0,\infty);\ell^{\infty}(G))$.
\end{theorem}

\section{Finite time blow-up for any initial datum}\label{blowup}

We shall deal here with the blowup analysis. We start with some technical tools.

\subsection{Two key estimates}

Let us first prove a preliminary lemma.

\begin{lemma}\label{lemma1}
Let $(G,\omega,\mu)$ be a weighted infinite graph with $\lambda_1(G)>0$. Suppose that $u_0\in \mathcal F, u_0\geq 0$, $u_0(x_0)>0$ for some $x_0\in G.$
Let $\varepsilon\in(0,\lambda_1(G))$. 
Then there exists $t_0=t_0(x_0,\varepsilon)>0$ such that
\begin{equation}\label{eq31}
(e^{t\Delta}u_0)(x_0) \ge \, C_1 e^{-[\lambda_1(G)+\varepsilon]t}\,, \quad \text{for any}\,\,\, t>t_0\,,
\end{equation}
where $C_1:=u_0(x_0) \mu(x_0)\,.$
\end{lemma}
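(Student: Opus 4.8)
The plan is to reduce the estimate to the on-diagonal behavior of the heat kernel and then invoke the asymptotic formula \eqref{eq26} from Proposition \ref{prop2}. The starting point is the representation of the semigroup,
\begin{equation*}
(e^{t\Delta}u_0)(x_0)=\sum_{y\in G}p(x_0,y,t)\,u_0(y)\,\mu(y)\,.
\end{equation*}
Since $u_0\geq 0$ and $p>0$ in $G\times G\times(0,+\infty)$, every term in the series is nonnegative, so I can discard all terms except the one corresponding to $y=x_0$. This yields the crude but decisive lower bound
\begin{equation*}
(e^{t\Delta}u_0)(x_0)\,\geq\, p(x_0,x_0,t)\,u_0(x_0)\,\mu(x_0)\,=\,C_1\,p(x_0,x_0,t)\,,
\end{equation*}
with $C_1=u_0(x_0)\mu(x_0)>0$ by hypothesis. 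This step crucially exploits that the problem is posed on a discrete structure, where singling out a single vertex costs nothing.

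Next I would control $p(x_0,x_0,t)$ from below using \eqref{eq26}. Applying Proposition \ref{prop2} with $x=y=x_0$ gives
\begin{equation*}
\lim_{t\to+\infty}\frac{\log p(x_0,x_0,t)}{t}\,=\,-\lambda_1(G)\,.
\end{equation*}
Fixing $\varepsilon\in(0,\lambda_1(G))$, the definition of the limit provides a threshold $t_0=t_0(x_0,\varepsilon)>0$ such that for all $t>t_0$ one has $\frac{\log p(x_0,x_0,t)}{t}\geq -\lambda_1(G)-\varepsilon$, which is exactly the one-sided bound I need. Exponentiating, this reads
\begin{equation*}
p(x_0,x_0,t)\,\geq\, e^{-[\lambda_1(G)+\varepsilon]t}\qquad\text{for all }t>t_0\,.
\end{equation*}

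Combining the two displays yields $(e^{t\Delta}u_0)(x_0)\geq C_1\,e^{-[\lambda_1(G)+\varepsilon]t}$ for $t>t_0$, which is precisely \eqref{eq31}. The argument is essentially immediate once the right ingredients are assembled, so there is no serious analytical obstacle; the only point requiring care is to quote the asymptotic \eqref{eq26} in its \emph{pointwise} form, observing that its validity at the single diagonal pair $(x_0,x_0)$ suffices and that no locally uniform version is needed here. This is consistent with the remark in the introduction that, unlike the manifold setting of \cite{MeGP4}, the present proof avoids relying on any uniformity in the decay of the kernel.
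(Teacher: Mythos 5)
Your proof is correct and takes essentially the same route as the paper's: both discard every term of the semigroup sum except the diagonal one $y=x_0$, obtaining $(e^{t\Delta}u_0)(x_0)\ge C_1\,p(x_0,x_0,t)$, and then bound $p(x_0,x_0,t)$ from below by $e^{-[\lambda_1(G)+\varepsilon]t}$ for $t>t_0$ using the pointwise asymptotic \eqref{eq26} of Proposition \ref{prop2}. Your closing observation that only the pointwise (not locally uniform) form of \eqref{eq26} at the single pair $(x_0,x_0)$ is needed matches the paper's own remark in the introduction.
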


\begin{proof}
Let $x_0 \in G$. From \eqref{eq26} if follows that there exists $t_0>0$ such that
$$
p(x_0,x_0,t)\ge e^{-[\lambda_1(G)+\varepsilon]t} \quad \text{for every}\, t>t_0\,.
$$
Hence
$$
\begin{aligned}
(e^{t\Delta}u_0)(x) &=\sum_{y\in G} p(x_0,y,t)u_0(y)\mu(y)\\
&\ge p(x_0,x_0,t) u_0(x_0)\, \mu(x_0) \\
&\ge e^{-[\lambda_1(G)+\varepsilon]t}\, u_0(x_0) \mu(x_0).
\end{aligned}
$$
Consequently, we obtain \eqref{eq31} with $C_1:=u_0(x_0) \mu(x_0)>0\,.$
\end{proof}

Let $u$ be a solution of equation \eqref{problema}. Then, for any $x\in G$ and for any $T>0$, we define
\begin{equation}\label{eq32}
\Phi_x^T(t)\equiv \Phi_x(t):=\sum_{z\in G} p(x,z,T-t)\, u(z,t)\, \mu(z)\,\quad \text{ for any }\,\, t\in [0, T]\,.
\end{equation}
Observe that
\begin{equation}\label{eq33}
\Phi_x(0)=\sum_{z\in G} p(x,z,T) \,u_0(z)\,\mu(z)\,= (e^{T\Delta}u_0)(x), \,\, x\in G\,.
\end{equation}
Suppose that $u_0\in \ell^\infty(G)$. Choose any
\begin{equation}\label{eq34a}
\delta>\|u_0\|_{\ell^{\infty}(G)}.
\end{equation}
From \eqref{eq34a} and \eqref{eq23} we obtain that, for any $x\in G$,
\begin{equation}\label{eq34b}
\begin{aligned}
\Phi_x(0)&=\sum_{z\in G} p(x,z,T) \,u_0(z)\, \mu(z)\\
&\le\,\|u_0\|_{\ell^{\infty}(G)}\sum_{z\in G} p(x,z,T)\, \mu(z)\\
&< \delta.
\end{aligned}
\end{equation}

We now state the following lemma.

\begin{lemma}\label{lemma2}
Let $(G,\omega,\mu), f, h, u_0$ be as in Theorem \ref{teo1}. Let $x\in G$ and $\Phi_x(t)$ be as in \eqref{eq32}. Set $\alpha:=h'(0).$ Then
\begin{equation}\label{eq34}
\Phi_x(0)\,\,\le\,\, C\,e^{-\alpha T}, \quad \text{for any}\,\,T\ge \bar t,
\end{equation}
for suitable $\bar t>0$ and $C>0$, depending on $x$.
\end{lemma}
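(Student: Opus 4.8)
The plan is to obtain the sharp rate $\alpha=h'(0)$ not from the heat kernel (which only decays like $e^{-\lambda_1(G)t}$, and from below by Lemma \ref{lemma1}), but from the nonlinearity, through a differential inequality for $\Phi_x$. Since $\Phi_x$ is built from a solution $u$ of \eqref{problema} via \eqref{eq32}, the estimate \eqref{eq34} is to be read as holding on every interval $[0,T]$ on which such a solution exists and stays finite; this is precisely the setting in which the lemma is used downstream, namely in a proof by contradiction that assumes global existence. The two ingredients are the inequality $f\ge h$ together with the convexity of $h$, and the integrability \eqref{N5}.

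First I would differentiate \eqref{eq32} for $t\in[0,T)$. Using that the kernel solves $\partial_s p(x,z,s)=\Delta_z p(x,z,s)$ (by symmetry and the heat equation) and that $u$ solves $u_t=\Delta u+f(u)$, the two Laplacian contributions cancel after a discrete summation by parts (self-adjointness of $\Delta$ on $(G,\mu)$), leaving $\Phi_x'(t)=\sum_{z\in G}p(x,z,T-t)f(u(z,t))\mu(z)\ge 0$. Invoking $f\ge h$, then Jensen's inequality for the probability measure $z\mapsto p(x,z,T-t)\mu(z)$ — whose total mass is $1$ by stochastic completeness — and the convexity of $h$, I would obtain
\[
\Phi_x'(t)\ge\sum_{z\in G}p(x,z,T-t)h(u(z,t))\mu(z)\ge h\!\left(\sum_{z\in G}p(x,z,T-t)u(z,t)\mu(z)\right)=h(\Phi_x(t)).
\]
Note that $\Phi_x(0)=(e^{T\Delta}u_0)(x)>0$ since $p>0$ and $u_0\ge0$, $u_0\not\equiv0$, and that $\Phi_x$ is then strictly increasing, so $h(\Phi_x(t))>0$ throughout and the inequality is genuinely separable.

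Next I would integrate. Setting $\Psi(v):=\int_v^{+\infty}\frac{ds}{h(s)}$ (finite for $v>0$ by \eqref{N5}, strictly decreasing, with $\Psi(0^+)=+\infty$ because $h(0)=0$), the bound $\Phi_x'/h(\Phi_x)\ge1$ integrated over $[0,T]$ gives $\Psi(\Phi_x(0))-\Psi(\Phi_x(T))\ge T$, hence $\Psi(\Phi_x(0))\ge T$ since $\Psi\ge0$. To turn this into an exponential bound I would use that convexity of $h$ with $h(0)=0$ forces $h(s)\ge h'(0)s=\alpha s$, so that for $v\le1$ one has $\Psi(v)\le\frac1\alpha\log\frac1v+\Psi(1)$. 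Taking $\bar t:=\Psi(1)=\int_1^{+\infty}\frac{ds}{h(s)}$, for every $T\ge\bar t$ the relation $\Psi(\Phi_x(0))\ge T\ge\Psi(1)$ forces $\Phi_x(0)\le1$, and then $T\le\frac1\alpha\log\bigl(1/\Phi_x(0)\bigr)+\Psi(1)$ rearranges to $\Phi_x(0)\le e^{\alpha\Psi(1)}e^{-\alpha T}$, which is \eqref{eq34} with $C:=e^{\alpha\bar t}$. These constants can in fact be chosen independently of $x$.

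The integration of the ODE and the inversion of $\Psi$ are routine; I expect the genuine obstacle to be the first step, i.e. the rigorous justification of the term-by-term differentiation of the infinite sum defining $\Phi_x$ and of the summation by parts that cancels the two Laplacian terms. Since solutions are only assumed to be mild and the graph is infinite, one must guarantee absolute convergence of the relevant double sums and the legitimacy of interchanging differentiation with summation. This is exactly where the discreteness of $G$ replaces the locally uniform heat-kernel information that is unavailable here: the sums over $z$ can be controlled directly by the boundedness of $u$ on $[0,t]$, the summability $\sum_{z\in G}p(x,z,T-t)\mu(z)\le1$ from \eqref{eq23}, and local finiteness, so that dominated convergence applies and no uniform control of the kernel asymptotics is needed.
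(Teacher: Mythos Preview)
Your argument is correct, and the second half (the ODE integration via $\Psi(v)=\int_v^{+\infty}\frac{ds}{h(s)}$) is in fact cleaner than what the paper does: you integrate $\Phi_x'/h(\Phi_x)\ge1$ directly on $[0,T]$ and invert, obtaining the explicit $x$--independent constants $\bar t=\Psi(1)$ and $C=e^{\alpha\Psi(1)}$. The paper instead fixes $\delta>\|u_0\|_{\ell^\infty}$, first uses the linear bound $h(s)\ge\alpha s$ to locate a time $\bar t$ with $\Phi_x(\bar t)=\delta$, and only then runs the $\Psi$--type argument on $(\bar t,T)$; the resulting constants are the same in spirit ($C=\delta\,e^{\alpha\int_\delta^{+\infty}\frac{dz}{h(z)}}$) but the two--stage presentation is heavier and makes $\bar t$ depend on $x$ and $T$.

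Where you diverge more substantially is in how you obtain $\Phi_x'(t)\ge h(\Phi_x(t))$. You propose to differentiate \eqref{eq32} using $\partial_s p=\Delta_z p$ and $u_t=\Delta u+f(u)$ and then cancel the Laplacians by summation by parts. The paper avoids this entirely: it plugs the \emph{mild} formula \eqref{solmild} for $u(z,t)$ into \eqref{eq32}, applies the semigroup identity \eqref{eq24} twice, and obtains directly
\[
\Phi_x(t)=\Phi_x(0)+\int_0^t\sum_{y\in G}p(x,y,T-s)\,f(u(y,s))\,\mu(y)\,ds,
\]
from which $\Phi_x'(t)=\sum_y p(x,y,T-t)f(u(y,t))\mu(y)$ and then Jensen as you do. This route never invokes the pointwise equation $u_t=\Delta u+f(u)$, never needs summation by parts on an infinite graph, and requires no upgrade of mild solutions to classical ones; it uses only \eqref{solmild} and \eqref{eq24}. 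Your version is not wrong, but the step ``mild $\Rightarrow$ classical'' that it tacitly relies on is extra work you would have to supply, whereas the paper's semigroup trick makes it unnecessary.
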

Note that $\bar t$ and $C$ are given by \eqref{eq312a} and \eqref{eqf2} below, respectively, with $\delta$ as in \eqref{eq34a}.

\begin{proof}
Let $u$ be a solution to problem \eqref{problema}, hence
\begin{equation}\label{eq30f}
u(z,t)=\sum_{y\in G} p(z,y,t) u_0(y)\,\mu(y) + \int_0^t\sum_{y\in G} p(z,y,t-s)f(u)\,\mu(y)\, ds.
\end{equation}
In the definition of $\Phi_x^T(t)\equiv \Phi_x(t)$ (see \eqref{eq32}) fix any
\begin{equation}\label{eq20f}
T>\frac1{\alpha}\left[\log\delta -\log\Phi_x(0)\right]\,.
\end{equation}
We multiply \eqref{eq30f} by $p(x,z,T-t)$ and sum over $z\in G$. Therefore, we get
\begin{equation}\label{eq35}
\begin{aligned}
\sum_{z\in G} p(x,z,T-t)\,u(z,t)\,\mu(z) &=\sum_{z\in G}\sum_{y\in G} p(z,y,t)\,u_0(y)\,p(x,z,T-t)\,\mu(z)\mu(y)\\
&+ \int_0^t\sum_{z\in G}\sum_{y\in G} p(z,y,t-s)\,f(u)\,p(x,z,T-t)\,\mu(z)\mu(y)\,ds.
\end{aligned}
\end{equation}
Now, due to \eqref{eq32}, for all $t\in (0, T),$ equality \eqref{eq35} reads
$$
\begin{aligned}
\Phi_x(t)&=\sum_{z\in G}\sum_{y\in G} p(z,y,t)\,u_0(y)\,p(x,z,T-t)\,\mu(z)\mu(y) \\
&+ \int_0^t\sum_{z\in G}\sum_{y\in G} p(z,y,t-s)\,f(u)\,p(x,z,T-t)\,\mu(z)\mu(y)\,ds.
\end{aligned}
$$
By \eqref{eq33}, for all $t\in (0, T),$
\begin{equation*}
\begin{aligned}
\Phi_x(t)&= \sum_{y\in G} p(x,y,T)\,u_0(y)\mu(y)+\int_0^t\sum_{y\in G} f(u)\,p(x,y,T-s)\mu(y)\,ds \\
&= \Phi_x(0)+\int_0^t\sum_{y\in G} f(u)\,p(x,y,T-s)\mu(y)\,ds\,.
\end{aligned}
\end{equation*}
Since $f\geq h$ in $[0, +\infty$),
\begin{equation}\label{eq35b}
\begin{aligned}
\Phi_x'(t)&= \sum_{y\in G} f(u)\,p(x,y,T-t)\mu(y) \\
&\ge\sum_{y\in G} h(u)\,p(x,y,T-t)\mu(y)\,.
\end{aligned}
\end{equation}

Since $h$ is an increasing convex function, by using Jensen inequality, we get
\begin{equation}\label{eq37}
\sum_{y\in G} p(x,y,T-t)\,h\left(u(y,t)\right)\mu(y)\geq h\left(\sum_{y\in G} p(x,y,T-t)\,u(y,t)\mu(y)\right)=h(\Phi_x(t))\,.
\end{equation}

Combining together \eqref{eq35b} and \eqref{eq37}, we obtain
\begin{equation}\label{eq311}
\Phi_x'(t)\,\ge \, h(\Phi_x(t))\quad \text{for all }\,\, t\in (0, T)\,.
\end{equation}

Fix any $x\in G$. We first observe that \eqref{eq311} implies that $\Phi_x(t)$ is an increasing function w.r.t the time variable $t$, since
\begin{equation}\label{eq311c}
\Phi_x'(t)\,>\,0\quad \text{for any}\,\,t\in (0, T).
\end{equation}
Moreover, due to  \eqref{eq34a} and  \eqref{eq34b}, by continuity of $t\mapsto \Phi_x(t)$, we can infer that there exists $t_1>0$ such that
\begin{equation}\label{eqf1}
\Phi_x(t)<\delta \quad \text{for all }\,\, t\in (0, t_1)\,.
\end{equation}

Since $h$ is convex, increasing in $[0, +\infty), h(0)=0, h'(0)=\alpha$, then
\begin{equation}\label{N3}
h(s)\geq \alpha s\quad \text{ for all }\,\, s\geq 0\,.
\end{equation}
Due to \eqref{N3} and to \eqref{eqf1}, we get
\begin{equation*}
\begin{cases}
&\Phi_x'(t)\,\ge\, \alpha\, \Phi_x(t) \quad \text{for any}\,\,t\in(0,t_1), \\
&\Phi_x(0)\,<\,\delta.
\end{cases}
\end{equation*}
Let
\[\bar t:=\sup\{t>0\,:\, \Phi_x(t)<\delta\}\,.\]
We claim that
\begin{equation}\label{eq320}
0<\bar t\,\le \, -\frac{1}{\alpha}\log(\Phi_x(0)) +\frac{\log \delta}{\alpha}\,.
\end{equation}

In order to show \eqref{eq320}, consider the Cauchy problem
\begin{equation*}
\begin{cases}
y'(t) = \alpha y(t),& t>0\\
y(0)=\Phi_x(0)\,.
\end{cases}
\end{equation*}
Clearly,
\[y(t)=\Phi_x(0) e^{\alpha t}, \quad t>0\,.\]
Hence
\begin{equation*}
y(\tau)=\delta \quad \text{whenever }\,\, \tau=\frac 1{\alpha}\left[\log(\delta)-\log(\Phi_x(0))\right]\,.
\end{equation*}
Furthermore, note that, in view of \eqref{eq34b} and \eqref{eq20f},
\begin{equation}\label{eq31f}
0<\tau <T\,.
\end{equation}
By comparison,
\[\Phi_x(t)\geq y(t)\quad \text{for all }\, t\in (0, T)\,.\]
Thus, we can infer that there exists $\bar t\in (0, \tau\,]$ such that
\begin{equation}\label{eq312a}
\Phi_x(\bar t)=\delta.
\end{equation}
In particular, from \eqref{eq31f} it follows that $\bar t<T\,.$
\smallskip

Due to \eqref{eq311c} and \eqref{eq312a}, we obtain that
\begin{equation}\label{eq312}
\Phi_x(t)\,>\,\delta\quad \text{for any}\,\,\, \bar t<t <T.
\end{equation}
By \eqref{eq311}, in particular we have
\begin{equation}\label{eq314}
\Phi_x'(t)\,\ge \,h(\Phi_x(t))\quad \text{for any}\,\,\,\bar{t}<t<T.
\end{equation}

Define
\begin{equation*}
H(t):=\int_{\Phi_x(t)}^{+\infty} \frac{1}{h(z)}\,dz\,\quad \text{for all }\,\, \bar t<t <T \,.
\end{equation*}
Note that $H$ is well-defined thanks to hypotheses \eqref{N5} and to \eqref{eq312}. Furthermore,
\begin{equation}\label{eq39}
H'(t)=-\frac{\Phi_x'(t)}{h(\Phi_x(t))}\,\quad \text{for any }\,\, \bar t<t<T\,.
\end{equation}
We now define
\begin{equation}\label{eq315}
w(t):=\exp\{H(t)\}\quad \text{for any}\,\,\,\bar{t}<t<T.
\end{equation}
Then, due to \eqref{eq39} and \eqref{eq314},
\begin{equation}\label{eq39b}
\begin{aligned}
w'(t)&=-\frac{\Phi_x'(t)}{h(\Phi_x(t))}w(t)\\
&\le - \,\,w(t)\,\quad\quad\quad \text{for any}\,\,\,\bar{t}<t<T.
\end{aligned}
\end{equation}
By integrating \eqref{eq39b} we get:
\begin{equation}\label{eq316}
\begin{aligned}
w(t)&\le w(\bar t\,)\exp\left\{-\int_{\bar t}^t\,ds\right\}\\
&\le w(\bar t\,)\exp\left\{-\,(t-\bar t)\right\} \quad \text{for any}\,\,\,\bar{t}<t<T.
\end{aligned}
\end{equation}
We substitute \eqref{eq315} into \eqref{eq316}, so we have
\begin{equation*}
\exp\{H(t)\}\,\le\,\exp\left\{H(\bar t)-\,(t-\bar t)\right\}\quad \text{for any}\,\,\,\bar{t}<t<T.
\end{equation*}
Thus, for any $\bar t<t<T$,
\begin{equation}\label{eq319}
H(t)\,\le\,H(\bar t)-\,(t-\bar t\,).
\end{equation}

We now combine \eqref{eq319} together with \eqref{eq320}, whence
$$
0\le H(t)\,\le\,H(\bar t)-\,\left(t +\frac{1}{\alpha}\log(\Phi_x(0)) -\frac{\log\delta}{\alpha}\right) \quad  \text{for any} \,\,\, \bar t<t<T.
$$
Hence
\begin{equation}\label{eq325}
\log(\Phi_x(0))\,\le \, \alpha H(\bar t)+\log\delta -\alpha \,\,t\, \quad  \text{for any} \,\,\, \bar t<t<T.
\end{equation}
We now take the exponential of both sides of \eqref{eq325}. Thus we get, for any $\bar t<t<T$,
$$
\begin{aligned}
\Phi_x(0)&\le \exp\left\{\alpha H(\bar t)+\log\delta -\alpha \,\,t\right\}\\
&= C\,\exp\{-\alpha\,t\}\,,
\end{aligned}
$$
where
\begin{equation}\label{eqf2}
C:=\exp\{\alpha H(\bar t)+\log\delta\}\,.
\end{equation}
This is the inequality \eqref{eq34}.
\end{proof}

\subsection{Proof of Theorem \ref{teo1}}
\begin{proof}[Proof of Theorem \ref{teo1}]
Take any $\delta>0$ fulfilling \eqref{eq34a}.  We suppose, by contradiction, that $u$ is a global solution of problem \eqref{problema}.
Since $\alpha:=h'(0)>\lambda_1(G)$, there exists $\varepsilon\in (0,\alpha-\lambda_1(G))$ such that
$$
\alpha>\lambda_1(G)+\varepsilon.
$$
Thus obviously,
\begin{equation}\label{eq327}
\lim_{T\to +\infty} e^{\alpha-(\lambda_1(G)+\varepsilon)]T}=+\infty.
\end{equation}
Let $x_0\in G$. By Lemma \ref{lemma1} and Lemma \ref{lemma2},
$$
C_1 \, e^{-[\lambda_1(G)+\varepsilon]\,T}\,\,\le\,\,(e^{T\Delta}u_0)(x)= \Phi_x(0)\,\,\le\,\, C\,e^{-\alpha\, T}, \quad \text{for any}\,\,\, T>\max\{t_0, \bar t\,\},
$$
where $t_0>0$, $C_1>0$ are given in Lemma \ref{lemma1}, while $\bar t>0$, $C>0$ in Lemma \ref{lemma2}. Hence, if $u$ exists globally in time, we would have
\begin{equation}\label{eq326}
e^{[\alpha-(\lambda_1(G)+\varepsilon)]\,T}\,\,\le\,\, \frac{C}{C_1} \quad \text{for any}\,\,\, T>\max\{t_0, \bar t\,\}.
\end{equation}
Nonetheless, due to \eqref{eq327}, the left hand side of \eqref{eq326} tends to $+\infty$ as $T\to \infty$.
Thus, we have a contradiction. Hence the thesis follows.

\end{proof}

\section{Global existence}\label{existence}
First of all let us introduce the Banach space where the necessary fixed point theorem will be applied.
\begin{definition}\label{funcspace}
Let $M>0, \gamma>0$. Let  $y_0\in G$ be a point such that $u_0(y_0)>0$. We denote by $\mathcal{M}(M, \gamma, y_0)\equiv \mathcal M$ the set of all non-negative functions $u: G\times[0,+\infty)\to \mathbb R$ such that $t\mapsto u(x, t)$ is continuous for each $x\in G$, and
\begin{equation}\label{eq51}
0\le u(x,t)\le M p(x,y_0,t+\gamma)e^{\lambda_1(G) t} \quad \text{ for all } x\in G, t\geq 0 .
\end{equation}
\end{definition}
\noindent We also assume that
\begin{equation}\label{eq51bis}
0\le u_0(x)\le \varepsilon\, p(x,y_0,t)\,,
\end{equation}
where
\begin{equation}\label{eps}
0<\varepsilon\le \left(1-\frac{L}{\lambda_1(G)}\right)M,
\end{equation}
where \eqref{eq58} is assumed, so that the r.h.s. of the above inequality is strictly positive. It is easily seen that $\mathcal M$ endowed with the norm
\begin{equation}\label{eq52}
\|u\|_{\mathcal{M}}:=\sup_{t>0,\,x\in G}\frac{|u(x,t)|}{p(x,y_0,t+\gamma)e^{\lambda_1(G) t}}
\end{equation}
is a Banach space. Let
\begin{equation}\label{eq53}
(\Psi u)(x,t):=\sum_{z\in G}p(x,z,t)u_0(z)\mu(z)+\int_0^t\sum_{z\in G}p(x,z,t-s)f(u(z,s))\mu(z)\,ds\,.
\end{equation}
We first prove two key estimates that ensure that the map $\Psi:\mathcal{M}\to\mathcal{M}$ is a contraction map in the space $(\mathcal{M}, \|\cdot\|_{\mathcal M})$.

\begin{lemma}\label{lemma51}
Let assumptions of Theorem \ref{teo2} be satisfied. Moreover assume that $u_0\in\ell^\infty(G)$ is such that \eqref{eq51bis} and \eqref{eps} hold. Let $\mathcal M$ be as in Definition \ref{funcspace} with
\begin{equation}\label{eq302}
M\leq \frac{\delta}{\underline C},
\end{equation}
$\underline C$ being given by \eqref{e301}.
Then, for any  $u\in\mathcal{M}$,
\begin{equation}\label{eq54}
\Psi(u)\in\mathcal{M}.
\end{equation}
\end{lemma}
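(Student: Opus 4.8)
The plan is to verify that $\Psi$ preserves the two defining features of $\mathcal M$: non-negativity together with continuity in $t$, and the pointwise bound \eqref{eq51}. Non-negativity of $(\Psi u)(x,t)$ is immediate, since $p>0$, $u_0\ge 0$ and $f\ge 0$ (as $f$ is increasing with $f(0)=0$). Continuity of $t\mapsto(\Psi u)(x,t)$ will follow once the bound below is in hand, by dominated convergence applied to the series and to the time integral in \eqref{eq53}, using that $s\mapsto f(u(z,s))$ is continuous. The heart of the matter is therefore the upper bound, which I would obtain by estimating separately the two summands of \eqref{eq53},
\[
I_1(x,t):=\sum_{z\in G}p(x,z,t)u_0(z)\mu(z),\qquad I_2(x,t):=\int_0^t\sum_{z\in G}p(x,z,t-s)f(u(z,s))\mu(z)\,ds.
\]

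For $I_1$ I would use the pointwise control $u_0(z)\le\varepsilon\,p(z,y_0,\gamma)$ (cf. \eqref{eq51bis}, evaluated at time $\gamma$), the semigroup identity \eqref{eq24}, and the fact that $e^{\lambda_1(G)t}\ge 1$ for $t\ge 0$ (recall $\lambda_1(G)>0$), to get
\[
I_1(x,t)\le \varepsilon\sum_{z\in G}p(x,z,t)\,p(z,y_0,\gamma)\,\mu(z)=\varepsilon\,p(x,y_0,t+\gamma)\le \varepsilon\,p(x,y_0,t+\gamma)\,e^{\lambda_1(G)t}.
\]

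For $I_2$ the first, and key, step is an a priori $\ell^\infty$ bound guaranteeing that $u$ stays in the interval $[0,\delta]$ on which the Lipschitz estimate for $f$ is available. Choosing $\gamma\ge\underline t$, Proposition \ref{prop2bis} gives $p(z,y_0,s+\gamma)\le\underline C\,e^{-\lambda_1(G)(s+\gamma)}$, whence for $u\in\mathcal M$
\[
0\le u(z,s)\le M\,p(z,y_0,s+\gamma)\,e^{\lambda_1(G)s}\le M\underline C\,e^{-\lambda_1(G)\gamma}\le M\underline C\le\delta,
\]
the last inequality being exactly \eqref{eq302}. Since $f$ is Lipschitz with constant $L$ on $[0,\delta]$ and $f(0)=0$, this yields $f(u(z,s))\le L\,u(z,s)\le LM\,p(z,y_0,s+\gamma)\,e^{\lambda_1(G)s}$. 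Inserting this into $I_2$, collapsing the double kernel by \eqref{eq24} into $p(x,y_0,t+\gamma)$, and computing $\int_0^t e^{\lambda_1(G)s}\,ds=(e^{\lambda_1(G)t}-1)/\lambda_1(G)\le e^{\lambda_1(G)t}/\lambda_1(G)$, I would arrive at
\[
I_2(x,t)\le \frac{LM}{\lambda_1(G)}\,p(x,y_0,t+\gamma)\,e^{\lambda_1(G)t}.
\]

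Adding the two bounds, the claim $(\Psi u)(x,t)\le M\,p(x,y_0,t+\gamma)\,e^{\lambda_1(G)t}$ reduces to the scalar inequality $\varepsilon+\tfrac{LM}{\lambda_1(G)}\le M$, which is precisely the content of \eqref{eps}. I expect the main obstacle to be the a priori bound $u\le\delta$: it is what legitimizes the linearization $f(u)\le Lu$, and it is enabled by the interplay of the heat-kernel decay \eqref{e301}, the choice $\gamma\ge\underline t$, and the smallness \eqref{eq302} of $M$. It is also the step where the strict inequality $L<\lambda_1(G)$ is felt, since it leaves the strictly positive budget $M-\tfrac{LM}{\lambda_1(G)}$ available for $\varepsilon$, allowing the contraction scheme to close; this margin degenerates when $L=\lambda_1(G)$, which is exactly why Theorem \ref{teo3} will require a different argument.
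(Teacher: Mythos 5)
Your proof is correct and follows essentially the same route as the paper's: the same decomposition into the initial-datum term and the Duhamel term, the same a priori bound $u\le\delta$ via \eqref{e301} and \eqref{eq302} to legitimize $f(u)\le Lu$, the same collapse of the double kernel by \eqref{eq24}, and the same final scalar inequality $\varepsilon+\tfrac{LM}{\lambda_1(G)}\le M$ from \eqref{eps}. If anything, you are slightly more careful than the paper in making explicit the requirement $\gamma\ge\underline t$ needed to invoke Proposition \ref{prop2bis}, and in checking non-negativity and continuity, which the paper leaves implicit.
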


\begin{proof}
Observe that, since $u\in\mathcal{M}$ and \eqref{eq302} holds, due to \eqref{e301}, we get
\begin{equation}\label{eq56}
\begin{aligned}
0\leq u(x,t) &\le M\,p(x,y_0,t+\gamma)e^{\lambda_1(G) t}\\
&\le M\,\underline C\,e^{-\lambda_1(G)\gamma}\,e^{-(\lambda_1(G)-\lambda_1(G)) t}\\
&\le \delta\quad \text{ for any } x\in G, t>0.
\end{aligned}
\end{equation}
Therefore, in particular,
$$\|u\|_{\mathcal M}\le M.$$
By exploiting the definition of $\Psi$ in \eqref{eq53}, thanks to 
\eqref{eq51bis}, 
we have that
\begin{equation}\label{eq59}
\begin{aligned}
0\le(\Psi u)(x,t)&=\sum_{z\in G}p(x,z,t)u_0(z)\mu(z)+\int_0^t\sum_{z\in G} p(x,z,t-s) f(u(z,s))\mu(z)\,ds\\
&=\sum_{z\in G}p(x,z,t)u_0(z)\mu(z)\\ &+\int_0^t\sum_{z\in G} p(x,z,t-s) \frac{f(u(z,s))}{p(x,y_0,s+\gamma)e^{\lambda_1(G) s}}p(z,y_0,s+\gamma)e^{\lambda_1(G) s}\mu(z)\,ds
\end{aligned}
\end{equation}
for any $x\in G, t>0$. Observe that in view of \eqref{eq56} and the very definition of $L$,
\begin{equation}\label{eq303}
f(u(z, s))\leq L u(z,s) \quad \text{ for all } z\in G, s\geq 0\,.
\end{equation}
By \eqref{eq24}, \eqref{eps}, \eqref{eq59} and \eqref{eq303},
\begin{equation}\label{e59b}
\begin{aligned}
(\Psi u)(x,t)=
&\le\varepsilon\sum_{z\in G}p(x,z,t)p(z,y_0,\gamma)\mu(z)\\
&+\int_0^t\sum_{z\in G} p(x,z,t-s) \frac{L u(z,s)}{p(z,y_0,s+\gamma)e^{\lambda_1(G) t}}p(z,y_0,s+\gamma)e^{\lambda_1(G) s}\mu(z)\,ds\\
&\le \varepsilon p(x,y_0,t+\gamma)+L\,\|u\|_{\mathcal M}\int_0^te^{\lambda_1(G) s}\sum_{z\in G} p(x,z,t-s)p(z,y_0,s+\gamma)\mu(z)\,ds\\
&\le\varepsilon p(x,y_0,t+\gamma)e^{\lambda_1(G) t}+L \,p(x,y_0,t+\gamma)\|u\|_{\mathcal M}\int_0^te^{\lambda_1(G) s}\,ds\\
&\le\varepsilon p(x,y_0,t+\gamma)e^{\lambda_1(G) t}+ \frac{L}{\lambda_1(G)}\,p(x,y_0,t+\gamma) e^{\lambda_1(G) t}\|u\|_{\mathcal M}\\
&\le \left(\varepsilon + \frac{L}{\lambda_1(G)}M\right)p(x,y_0,t+\gamma) e^{\lambda_1(G) t}\\
&\le M\,p(x,y_0,t+\gamma)e^{\lambda_1(G) t} \quad \text{ for any } x\in G, t>0\,.
\end{aligned}
\end{equation}
Hence, $(\Psi u)\in\mathcal{M}$.
\end{proof}

\begin{proposition}\label{prop51}
Let assumptions of Theorem \ref{teo2} be satisfied. Moreover assume that $u_0\in\ell^\infty(G)$ is such that \eqref{eq51bis} and \eqref{eps} hold. Let $\mathcal M$ be as in Definition \ref{funcspace} with $M$ given by \eqref{eq302}.
Then, for any $u,v\in\mathcal{M}$,
\begin{equation}\label{eq55}
\|\Psi u-\Psi v\|_{\mathcal M}\le\frac{L}{\lambda_1}\|u-v\|_{\mathcal M}\,.
\end{equation}
\end{proposition}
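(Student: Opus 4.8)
The plan is to estimate the difference $\Psi u - \Psi v$ directly from the Duhamel formula \eqref{eq53}, observing that the linear term $\sum_{z}p(x,z,t)u_0(z)\mu(z)$ cancels since it does not depend on $u$. Thus for any $x\in G$ and $t>0$,
\[
(\Psi u)(x,t)-(\Psi v)(x,t)=\int_0^t\sum_{z\in G}p(x,z,t-s)\bigl[f(u(z,s))-f(v(z,s))\bigr]\mu(z)\,ds.
\]
First I would note that, exactly as in \eqref{eq56} of Lemma \ref{lemma51}, both $u(z,s)$ and $v(z,s)$ lie in $[0,\delta]$ for all $z\in G$, $s\ge 0$, because $u,v\in\mathcal M$ and $M\le\delta/\underline C$. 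Hence the Lipschitz bound with constant $L$ applies on $[0,\delta]$, giving $|f(u(z,s))-f(v(z,s))|\le L\,|u(z,s)-v(z,s)|$.

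Next I would rewrite $|u(z,s)-v(z,s)|$ in terms of the $\mathcal M$-norm by inserting and factoring out the weight $p(z,y_0,s+\gamma)e^{\lambda_1(G)s}$, exactly as in the passage from \eqref{eq59} to \eqref{e59b}. By the very definition \eqref{eq52} of $\|\cdot\|_{\mathcal M}$,
\[
|u(z,s)-v(z,s)|\le \|u-v\|_{\mathcal M}\,p(z,y_0,s+\gamma)e^{\lambda_1(G)s}.
\]
Substituting this into the integral, pulling the constants $L$ and $\|u-v\|_{\mathcal M}$ out, and applying the semigroup identity \eqref{eq24} to collapse $\sum_{z}p(x,z,t-s)p(z,y_0,s+\gamma)\mu(z)=p(x,y_0,t+\gamma)$, I obtain
\[
|(\Psi u)(x,t)-(\Psi v)(x,t)|\le L\,\|u-v\|_{\mathcal M}\,p(x,y_0,t+\gamma)\int_0^t e^{\lambda_1(G)s}\,ds.
\]
Evaluating the time integral gives $\int_0^t e^{\lambda_1(G)s}\,ds\le \frac{1}{\lambda_1(G)}e^{\lambda_1(G)t}$, so the right-hand side is bounded by $\frac{L}{\lambda_1(G)}\|u-v\|_{\mathcal M}\,p(x,y_0,t+\gamma)e^{\lambda_1(G)t}$.

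Finally I would divide both sides by $p(x,y_0,t+\gamma)e^{\lambda_1(G)t}$ and take the supremum over $x\in G$ and $t>0$, which by definition \eqref{eq52} yields precisely $\|\Psi u-\Psi v\|_{\mathcal M}\le\frac{L}{\lambda_1(G)}\|u-v\|_{\mathcal M}$. The computation mirrors Lemma \ref{lemma51} almost line for line, so I do not expect a genuine obstacle; the only point requiring care is the confirmation that both arguments of $f$ stay within $[0,\delta]$ so that the single Lipschitz constant $L=L(f,\delta)$ governs the difference, and the correct bookkeeping of the exponential weights when factoring through the $\mathcal M$-norm. Together with \eqref{eq58}, which guarantees $L/\lambda_1(G)<1$, this contraction estimate is what will let the Banach fixed point theorem produce the desired global mild solution in Theorem \ref{teo2}.
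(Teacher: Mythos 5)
Your proposal is correct and follows essentially the same route as the paper's own proof: cancellation of the linear (initial-datum) term, the bound $0\le u,v\le\delta$ from \eqref{eq56} to invoke the Lipschitz constant $L$ on $[0,\delta]$, the bound $|u-v|\le\|u-v\|_{\mathcal M}\,p(\cdot,y_0,\cdot+\gamma)e^{\lambda_1(G)\cdot}$ from \eqref{eq52}, the semigroup identity \eqref{eq24} to collapse the spatial sum, and the estimate $\int_0^t e^{\lambda_1(G)s}\,ds\le e^{\lambda_1(G)t}/\lambda_1(G)$ before dividing by the weight and taking the supremum. No discrepancies worth noting.
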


\begin{proof} Let $u, v\in \mathcal M$.
Clearly, by Lemma \ref{lemma51}, $\Psi u, \Psi v\in \mathcal M$. Furthermore, as we have observed in the proof of Lemma \ref{lemma51}, \eqref{eq56} holds both for $u$ and $v$. This, combined with \eqref{eq52}, implies that
\begin{equation}\label{eq57}
\begin{aligned}
|f(u(x,t))-f(v(x,t))|&\le  L |u(x,t)-v(x,t)| \\ &\le L\,p(x,y_0,t+\gamma)e^{\lambda_1(G) t}\|u-v\|_{\mathcal M}\quad \text{ for any }\, x\in G, t>0\,.
\end{aligned}
\end{equation}
Now, due to \eqref{eq24}, \eqref{eq53} 
and \eqref{eq57}, we have that
\begin{equation}\label{eq510}
\begin{aligned}
0\le|(\Psi u)-(\Psi v)|(x,t)&=\left|\int_0^t\sum_{z\in G} p(x,z,t-s) [f(u)-f(v)](z,s)\mu(z)\,ds\right|\\
&\leq \int_0^t\sum_{z\in G} p(x,z,t-s) \big|f(u)-f(v)\big|(z,s)\mu(z)\,ds\\
&=\int_0^t\sum_{z\in G} p(x,z,t-s)L\,p(z,y_0,s+\gamma)e^{\lambda_1(G) s}\|u-v\|_{\mathcal M}\mu(z)\,ds\\
&\le L\,\|u-v\|_{\mathcal M}\int_0^te^{\lambda_1(G) s}\sum_{z\in G} p(x,z,t-s) p(z,y_0,s+\gamma)\mu(z)\,ds\\
&\le L\,p(x,y_0,t+\gamma)\|u-v\|_{\mathcal M} \int_0^te^{\lambda_1(G) s}\,ds\\
&\le \frac L{\lambda_1(G)} \,p(x,y_0,t+\gamma) e^{\lambda_1(G) t}\|u-v\|_{\mathcal M}.
\end{aligned}
\end{equation}
By dividing both sides of \eqref{eq510} by $p(x,y_0,t+\gamma) e^{\lambda_1(G) t}$, we get
$$
\begin{aligned}
0&\le\frac{|(\Psi u)-(\Psi v)|(x,t)}{p(x,y_0,t+\gamma) e^{\lambda_1(G) t}}\\
&\le\sup_{t>0,\,x\in G}\,\frac{|\Psi u-\Psi v|(x,t)}{p(x,y_0,t+\gamma) e^{\lambda_1(G) t}}\\
&=\|\Psi u-\Psi v\|_{\mathcal M}\\
&\le\frac{L}{\lambda_1(G)} \|u-v\|_{\mathcal M}.
\end{aligned}
$$
\end{proof}

\begin{proof}[Proof of Theorem \ref{teo2}]
In view of assumption \eqref{eq58} and Proposition \ref{prop51}, the map
\[\Psi:\mathcal M\to \mathcal M\]
defined in \eqref{eq53} is a contraction. Hence, by the Banach-Caccioppoli theorem, there exists a unique fixed point $u\in \mathcal M$ of $\Psi$. Such $u$ is a mild solution of problem \eqref{problema}. Furthermore, in view of \eqref{eq51}, $u$ is bounded in $G\times (0, +\infty).$
\end{proof}

\section{Proof of Theorem \ref{teo3}}\label{proofcritical}

\subsection{Existence of local in time solutions on balls}

Let $x_0\in G$ be a reference point, let $d$ be a distance on $G$.
For any $R>0$, consider the ball
$$B_R(x_0):=\{x\in G\,:\, d(x, x_0)< R\}.$$ Furthermore, for every $R>0$ let $\zeta_R\in C^{\infty}(G)$ be such that $\operatorname{supp} \zeta_R$ is a finite subset of $G$, $0\le \zeta_R\le 1$, $\zeta_R\equiv 1$ in $B_{R/2}$ For any $R>0$ there exists a unique mild solution $u_R$ to problem
\begin{equation}\label{eq612}
\begin{cases}
& \partial_t u=\Delta u + f(u)\quad \text{in}\,\,\,B_R\times (0,T)\\
& u=0\quad\quad\quad\quad \quad \quad \text{in}\,\,\,G\setminus B_R\times (0,T)\\
&u=u_0\zeta_R \quad\quad\quad \quad\,\,\, \text{in}\,\,\,B_R\times \{0\}\,.
\end{cases}
\end{equation}

Let $M>0,\gamma>0$ and let $y_0\in B_R$ be a point where $u_0$ is positive. We consider the linear space $\mathcal{V}_R\subset L^{\infty}((0,T),\ell^\infty(B_R))$ of all non-negative functions $u: B_R\times[0,T)\to \mathbb R$ such that $t\mapsto u(x, t)$ is continuous for each $x\in B_R$, and
\begin{equation}\label{eq615}
0\le u(x,t)\le M\,
\quad \text{ for all } x\in B_R, t\in[0,T) .
\end{equation}
\noindent We also assume that
\begin{equation}\label{eq616}
0\le u_0(x)\le \varepsilon\,,
\end{equation}
where, for $T<1/L$:
\begin{equation}\label{eps2}
0<\varepsilon\le \left(1-LT\right)M.
\end{equation}

On the space $\mathcal{V}_R$ we consider the supremum norm
\begin{equation*}
\|u\|_{\infty}:=\sup_{0<t<T,\,x\in B_R}|u(x,t)|
\end{equation*}
Let us define the map
\begin{equation}\label{eq622}
(\Psi u)(x,t):=\sum_{z\in B_R}p_R(x,z,t)u_0(z)\mu(z)+\int_0^t\sum_{z\in B_R}p_R(x,z,t-s)f(u(z,s))\mu(z)\,ds\,,
\end{equation}
where $p_R$ is the heat semigroup on the ball $B_R$ completed with homogeneous Dirichlet boundary conditions. We observe that
the following two estimates ensure that the map $\Psi:\mathcal{V}_R\to \mathcal{V}_R$ is a contraction map in the space $(\mathcal{V}_R, \|\cdot\|_{\infty})$.

\begin{lemma}\label{lemma62}
Let assumptions of Theorem \ref{teo3} be satisfied. Moreover assume that $u_0\in\ell^\infty(G)$ is such that \eqref{eq615} and \eqref{eps2} hold. Let
\begin{equation}\label{eq617}
M\leq \delta
\end{equation}
Then, for any  $u\in \mathcal{V}_R$
\begin{equation}\label{eq618}
\Psi(u)\in \mathcal{V}_R
\end{equation}
\end{lemma}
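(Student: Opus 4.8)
The plan is to reproduce the argument of Lemma \ref{lemma51}, but now on the finite set $B_R$, with the Dirichlet heat kernel $p_R$ and the flat bound $0\le u\le M$ replacing the weighted bound of Definition \ref{funcspace}. First I would note that, since the graph is locally finite and connected, $B_R$ is a finite set; hence every sum over $z\in B_R$ in \eqref{eq622} is a finite sum, $p_R$ is the kernel of the semigroup generated by the (finite) Dirichlet Laplacian on $B_R$, and $t\mapsto(\Psi u)(x,t)$ is continuous for each fixed $x$ (the first term is smooth in $t$, and the integrand of the second term is continuous in $(s,t)$ because $u$ is continuous in time and $f$ is continuous). Non-negativity of $\Psi u$ is immediate, since $p_R\ge 0$, $u_0\ge 0$, and $f\ge 0$ on $[0,+\infty)$ (as $f$ is increasing with $f(0)=0$).

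The substantive point is the upper bound $\Psi u\le M$. The key ingredient is the sub-Markovian property of the killed kernel: by domain monotonicity $0\le p_R(x,z,t)\le p(x,z,t)$, so \eqref{eq23} yields
$$\sum_{z\in B_R}p_R(x,z,t)\,\mu(z)\le\sum_{z\in G}p(x,z,t)\,\mu(z)\le 1\quad\text{for all }x\in B_R,\ t>0.$$
Applied to the first term of \eqref{eq622} together with the hypothesis \eqref{eq616}, this gives $\sum_{z\in B_R}p_R(x,z,t)u_0(z)\mu(z)\le\varepsilon$.

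For the integral term I would use that $u\in\mathcal{V}_R$ forces $0\le u\le M\le\delta$ by \eqref{eq617}, so that the Lipschitz bound of $f$ on $[0,\delta]$ together with $f(0)=0$ gives the pointwise estimate $f(u(z,s))\le L\,u(z,s)\le L M$. Inserting this, using the sub-Markovian bound once more and integrating over $s\in(0,t)\subset(0,T)$, the integral term is at most $L M t\le L M T$. Combining the two contributions and invoking \eqref{eps2},
$$(\Psi u)(x,t)\le\varepsilon+L M T\le(1-LT)M+L M T=M,$$
which is precisely the bound \eqref{eq615} defining $\mathcal{V}_R$. Hence $\Psi u\in\mathcal{V}_R$.

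I do not expect a genuine obstacle here: the computation is shorter than that of Lemma \ref{lemma51}. The one thing to watch is that \eqref{eps2} is consistent only when $1-LT>0$, i.e. $T<1/L=1/\lambda_1(G)$; this restricts the admissible local existence time $T$ and is exactly what makes the companion contraction estimate close with factor $LT<1$. In the critical case $L=\lambda_1(G)$ no spectral gain $L/\lambda_1(G)<1$ is available as in Lemma \ref{lemma51}, so it is the smallness of $T$ that plays the role of the contraction factor.
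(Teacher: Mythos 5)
Your proposal is correct and follows essentially the same argument as the paper: bound the initial-datum term by $\varepsilon$ using \eqref{eq616} and the mass bound $\sum_{z\in B_R}p_R(x,z,t)\mu(z)\le 1$, bound the source term by $LMT$ via $0\le u\le M\le\delta$ and $f(u)\le Lu$, and conclude with \eqref{eps2}. The only difference is cosmetic: you explicitly justify the sub-Markovian bound on $p_R$ by domain monotonicity $p_R\le p$ together with \eqref{eq23}, a step the paper uses implicitly.
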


\begin{proof}
Observe that, since $u\in \mathcal{V}_R$ and \eqref{eq617} holds, we get
\begin{equation}\label{eq619}
\begin{aligned}
0\leq u(x,t) &\le M
\le \delta\quad \text{ for any } x\in B_R,\, t\in[0,T).
\end{aligned}
\end{equation}
Therefore, in particular,
$$\|u\|_{\infty}\le M.$$
Observe that in view of \eqref{eq619} and the very definition of $L$,
\begin{equation}\label{eq621}
f(u(z, s))\leq L u(z,s) \quad \text{ for all } z\in B_R, s\in[ 0,T)\,.
\end{equation}
By exploiting the definition of $\Psi$ in \eqref{eq622}, thanks to \eqref{eq616}, \eqref{eps2}, \eqref{eq621}, \eqref{eq24} we have that
\begin{equation}\label{eq620}
\begin{aligned}
0\le(\Psi u)(x,t)&=\sum_{z\in B_R}p_R(x,z,t)u_0(z)\mu(z)+\int_0^t\sum_{z\in B_R} p_R(x,z,t-s) f(u(z,s))\mu(z)\,ds\\
&\le\varepsilon\sum_{z\in B_R}p_R(x,z,t)\mu(z)+\int_0^t\sum_{z\in B_R} p_R(x,z,t-s) L u(z,s)\mu(z)\,ds\\
&\le \varepsilon+L\,\|u\|_{\infty}\int_0^t\sum_{z\in B_R} p_R(x,z,t-s)\mu(z)\,ds\\
&\le\varepsilon+L \,T\,M\,\le M\,\quad \text{ for any } x\in B_R, \,\,t\in(0,T)\,.
\end{aligned}
\end{equation}
Hence, $(\Psi u)\in \mathcal{V}_R$.
\end{proof}

\begin{proposition}\label{prop61}
Let assumptions of Theorem \ref{teo3} be satisfied. Moreover assume that $u_0\in\ell^\infty(G)$ is such that \eqref{eq615} and \eqref{eps2} hold. Let $M$ be given by \eqref{eq617}.
Then, for any $u,v\in \mathcal{V}_R$,
\begin{equation}\label{eq623}
\|\Psi u-\Psi v\|_{\infty}\le L\,T\|u-v\|_{\infty}\,.
\end{equation}
\end{proposition}

\begin{proof} Let $u, v\in \mathcal{V}_R$.
Clearly, by Lemma \ref{lemma62}, $\Psi u, \Psi v\in \mathcal{V}_R$. Furthermore, as we have observed in the proof of Lemma \ref{lemma62}, \eqref{eq619} holds both for $u$ and $v$. This implies that
\begin{equation}\label{eq624}
\begin{aligned}
|f(u(x,t))-f(v(x,t))|&\le  L |u(x,t)-v(x,t)| \\ &\le L\,\|u-v\|_{\infty}\quad \text{ for any }\, x\in B_R, \,\,t\in(0,T)\,.
\end{aligned}
\end{equation}
Now, due to \eqref{eq24}, \eqref{eq53} and \eqref{eq624}, we have that
\begin{equation}\label{eq510}
\begin{aligned}
0\le|(\Psi u)-(\Psi v)|(x,t)&=\left|\int_0^t\sum_{z\in B_R} p_R(x,z,t-s) [f(u)-f(v)](z,s)\mu(z)\,ds\right|\\
&\leq \int_0^t\sum_{z\in B_R} p_R(x,z,t-s) \big|f(u)-f(v)\big|(z,s)\mu(z)\,ds\\
&\le L\,\|u-v\|_{\infty}\,\int_0^t\sum_{z\in B_R} p_R(x,z,t-s)\,\mu(z)\,ds\\
&\le L\,T\|u-v\|_{\infty}.
\end{aligned}
\end{equation}
Then we get
$$
\begin{aligned}
0&\le|(\Psi u)-(\Psi v)|(x,t)\le\sup_{t>0,\,x\in B_R}\,|\Psi u-\Psi v|(x,t)\\
&=\|\Psi u-\Psi v\|_{\infty}\le L\,T \|u-v\|_{\infty}.
\end{aligned}
$$
\end{proof}

\subsection{A global supersolution}
\begin{lemma}\label{lem61}
Problem \eqref{problema} admits a global supersolution $\overline u$.
\end{lemma}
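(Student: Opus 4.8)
The plan is to exhibit an explicit supersolution coming from the linearization of the equation at $u=0$. Since $L=\lambda_1(G)$ and $f(0)=0$, as long as a candidate function stays below the threshold $\delta$ we have $f(\sigma)\le L\sigma=\lambda_1(G)\,\sigma$, so the natural comparison function is the solution of the linear problem $w_t=\Delta w+\lambda_1(G)\,w$ with datum $u_0$. Explicitly, I would set
\[
\overline u(x,t):=e^{\lambda_1(G)t}\,(e^{t\Delta}u_0)(x)=e^{\lambda_1(G)t}\sum_{y\in G}p(x,y,t)\,u_0(y)\,\mu(y),\qquad x\in G,\ t\ge 0,
\]
which is well defined and continuous in $t$ because $u_0\in\ell^\infty(G)$ and $\sum_y p(x,y,t)\mu(y)\le 1$. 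I claim that $\overline u$ is the desired global supersolution, in the sense that it satisfies the reverse of the mild identity, namely $\overline u(x,t)\ge\sum_{y}p(x,y,t)u_0(y)\mu(y)+\int_0^t\sum_{y}p(x,y,t-s)f(\overline u(y,s))\mu(y)\,ds$.

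The first and main step is the uniform bound $0\le\overline u(x,t)\le\delta$ on all of $G\times[0,\infty)$; this is exactly where the two smallness hypotheses \eqref{eq12b} and \eqref{eq12c} enter, and where the competition between the prefactor $e^{\lambda_1(G)t}$ and the decay of the heat kernel has to be resolved. For $t\ge\underline t$ I would invoke the sharp estimate \eqref{e301}, $p(x,y,t)\le\underline C\,e^{-\lambda_1(G)t}$, together with the $\ell^1$ bound \eqref{eq12c}, to obtain
\[
\overline u(x,t)\le e^{\lambda_1(G)t}\,\underline C\,e^{-\lambda_1(G)t}\sum_{y\in G}u_0(y)\,\mu(y)=\underline C\,\|u_0\|_{\ell^1(G)}\le\delta.
\]
For $0\le t<\underline t$ I would instead use $\sum_y p(x,y,t)\mu(y)\le 1$ from \eqref{eq23} and the $\ell^\infty$ bound \eqref{eq12b}, which give $\overline u(x,t)\le e^{\lambda_1(G)t}\|u_0\|_{\ell^\infty(G)}\le e^{\lambda_1(G)\underline t}\,\delta e^{-L\underline t}=\delta$, since $L=\lambda_1(G)$. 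Combining the two regimes yields $\overline u\le\delta$ everywhere, and hence $f(\overline u)\le\lambda_1(G)\,\overline u$ on $G\times[0,\infty)$.

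The second step is the verification of the supersolution inequality, which is a short computation. By the semigroup identity \eqref{eq24}, for every $0<s<t$ one has $\sum_{y\in G}p(x,y,t-s)\,\overline u(y,s)\,\mu(y)=e^{\lambda_1(G)s}\,(e^{t\Delta}u_0)(x)$. Using this, together with $f(\overline u)\le\lambda_1(G)\overline u$ and $\int_0^t e^{\lambda_1(G)s}\,ds=\frac{1}{\lambda_1(G)}\bigl(e^{\lambda_1(G)t}-1\bigr)$, I would estimate
\[
\begin{aligned}
\sum_{y\in G}p(x,y,t)u_0(y)\mu(y)&+\int_0^t\sum_{y\in G}p(x,y,t-s)f(\overline u(y,s))\mu(y)\,ds\\
&\le (e^{t\Delta}u_0)(x)+\lambda_1(G)\,(e^{t\Delta}u_0)(x)\int_0^t e^{\lambda_1(G)s}\,ds\\
&=(e^{t\Delta}u_0)(x)\,e^{\lambda_1(G)t}=\overline u(x,t),
\end{aligned}
\]
which is precisely the mild supersolution inequality.

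The computation in the last step is routine; the genuinely delicate point—and the one I expect to be the crux—is the global-in-time bound $\overline u\le\delta$, because the exponential prefactor $e^{\lambda_1(G)t}$ is cancelled by the large-time heat-kernel decay only thanks to Proposition \ref{prop2bis} and the integrability assumption $u_0\in\ell^1(G)$. The role of the threshold time $\underline t$ and of the two separate smallness conditions \eqref{eq12b}, \eqref{eq12c} is exactly to patch the small-time and large-time regimes so that the bound holds uniformly. This uniform bound is what will subsequently furnish the a priori estimate needed to pass to the limit $R\to+\infty$ in the approximate problems \eqref{eq612}.
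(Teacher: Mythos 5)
Your proof is correct and takes essentially the same approach as the paper: the paper's supersolution is exactly yours, $\bar u(x,t)=e^{Lt}(e^{t\Delta}u_0)(x)$ with $L=\lambda_1(G)$, bounded by $\delta$ via the same two-regime argument (\eqref{eq23} with \eqref{eq12b} for $t\le\underline t$, and \eqref{e301} with \eqref{eq12c} for $t\ge\underline t$), which then yields $f(\bar u)\le L\bar u$. The only difference is cosmetic: the paper checks the pointwise differential inequality $\bar u_t-\Delta\bar u-f(\bar u)=L\bar u-f(\bar u)\ge 0$ and remarks that a classical supersolution is a mild one, whereas you verify the mild (Duhamel) inequality directly through the semigroup identity \eqref{eq24}; the two verifications are equivalent.
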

\begin{proof}
Consider the linear Cauchy problem for the heat equation
\begin{equation}\label{eq61}
\begin{cases}
&v_t=\Delta v\quad\,\, \text{in}\,\,\, G\times(0,+\infty)\\
&v=u_0\quad\quad \text{in}\,\,\,G\times\{0\},
\end{cases}
\end{equation}
with $u_0$ as in Theorem \ref{teo3}.
Observe that problem \eqref{eq61} admits the classical solution
\begin{equation}\label{eq62}
v(x,t)=\sum_{y\in G} \,p(x,y,t)\,u_0(y)\,\mu(y), \quad x\in G,\, t\geq 0.
\end{equation}
Hence, since $u_0\in \ell^\infty(G)$,
\begin{equation}\label{eq65}
\|v(t)\|_{\ell^{\infty}(G)}\le\|u_0\|_{\ell^{\infty}(G)}\quad \text{for any}\,\,\,t>0\,.
\end{equation}
Moreover, since $u_0\in \ell^1(G)$, due to \eqref{e301},
\begin{align}
v(x,t) \le \underline{C}\, \|u_0\|_{\ell^1(G)}\,e^{-\lambda_1 t}\quad \text{for any}\,\,\,x\in G, t>\underline t, \label{eq64}
\end{align}
where $\underline C$ and $\underline t$ have been defined in \eqref{e301}.
Define
\begin{equation*}
\bar{u}(x,t):= e^{L \,t}\,v(x,t),\quad x\in G, t\geq 0,
\end{equation*}
with $v$ as in \eqref{eq62}.
Note that, due to \eqref{eq12b} and \eqref{eq65}, for any $x\in G, t\in(0,\underline t]$,
\begin{equation}\label{eq69}
0\leq \bar u(x,t)\leq e^{L\,t}\|v(t)\|_{\ell^{\infty}(G)}\,\le\, e^{L\, t}\|u_0\|_{\ell^{\infty}(G)}\,\le\,\delta.
\end{equation}
Moreover, due to \eqref{eq12c}, \eqref{eq64}, since $L= \lambda_1(M)$, for any $t\ge\underline t$ we get
\begin{equation}\label{eq610}
0\leq \bar u(x,t)\leq e^{L\,t}\|v(t)\|_{\ell^{\infty}(G)}\le\, \underline{C}\,\|u_0\|_{\ell^1(G)}e^{-(\lambda_1-L)\, t}\le \,\delta.
\end{equation}
Inequalities \eqref{eq69} and \eqref{eq610} yield
\begin{equation}\label{eq68}
0\,\le\,\bar{u}(x,t)\,\le\, \delta\quad \text{for any}\,\,\,x\in G,\,\, t>0.
\end{equation}
Furthermore, we have
$$
\bar{u}_t-\Delta\bar{u}-f(\bar u)=L\,e^{L\,t}\,v+e^{L\,t}\,v_t-e^{L\,t}\Delta v-f(\bar u).
$$
Observe that, due to the assumptions on $f$ in Theorem \ref{teo3}, one has that, for any $s\in[0,\delta]$
$$
f(s)\le Ls\,.
$$
Therefore, due to \eqref{eq68} and by using the fact that $v$ is a solution to problem \eqref{eq61}, we get
\begin{equation}\label{eq611}
\bar{u}_t-\Delta\bar{u}-f(\bar u)= L \bar u - f(\bar u) \geq \,0.
\end{equation}
Hence $\bar u$ is a classical (and therefore mild) supersolution to problem \eqref{problema} in $G\times(0,\infty)$.
\end{proof}


\subsection{Global existence in the critical case $L=\lambda_1(G)$}
\begin{proof}[Proof of Theorem \ref{teo3}]
First observe that, due to Proposition \ref{prop61}, if as already required the inequality
$$
T<\frac1L
$$
holds, the mapping
\[\Psi:\mathcal{V}_R\to\mathcal{V}_R\]
defined in \eqref{eq622} is a contraction. Hence, by the Banach-Caccioppoli theorem, there exists a unique fixed point $u_R\in \mathcal{V}_R$ of $\Psi$. Such $u_R$ is a bounded mild solution of problem \eqref{eq612} in $B_R\times (0, T).$
Now, for any $R>0$, in view of \eqref{eq611}, since
\[ v= u_0\geq \zeta_R u_0 \quad \text{ in }\,\, G\times \{0\},\]
$\bar u$ is a bounded weak supersolution of problem \eqref{eq612}. Obviously, for any $R>0$, $\underline u\equiv 0$ is a subsolution to problem \eqref{eq612}.
Hence, by the comparison principle, for every $R>0$ we obtain
\begin{equation}\label{eq613}
0\,\le\,u_R\,\le\,\bar{u}\quad \text{for any}\,\,\,(x,t)\in B_R\times(0,T).
\end{equation}
This ensures that $u_R$ is indeed defined for every $(x,t)\in B_R\times(0,+\infty)$. Therefore
\begin{equation}\label{eq614}
u_R(x,t)=\sum_{y\in B_R}p_R(x,y,t)u_0(y)+\int_0^t\sum_{y\in B_R} p_R(x,y,t-s)f(u_R(y,s))\,ds,
\end{equation}
for every $x\in B_R$ and $t\in(0,+\infty).$

Then, let us observe that $\{p_R\}_{R>0}$ is a monotone increasing sequence which converge to $p$ as $R\to \infty$. Moreover, $\{u_R\}_{R>0}$ is monotone increasing hence it admits a limit, we name it $u$. Since $\{f(u_R)\}_{R>0}$ is monotone increasing, we infer that $f(u_R)\to f(u)$ as $R\to\infty$. By monotone convergence theorem we can take the limit as $R\to\infty$ in \eqref{eq614} and we get \eqref{solmild} thus the proof is complete.
\end{proof}
\par\bigskip\noindent
\textbf{Acknowledgments.}
The first and the third authors are members of the Gruppo Nazionale per l'Analisi Matematica, la Probabilit\`a e le loro Applicazioni (GNAMPA, Italy) of the Istituto Nazionale di Alta Matematica (INdAM, Italy). The second author is funded by the Deutsche Forschungsgemeinschaft (DFG, German Research Foundation) - SFB 1283/2 2021 - 317210226.
The first author that this work is part of the Prin project 2022 ``Partial Differential Equations and Related Geometric-Functional Inequalities'', ref. 20229M52AS, and the third author acknowledges that this work is part of the PRIN project 2022 ``Geometric-analytic methods for PDEs and applications'', ref. 2022SLTHCE, both financially supported by the EU, in the framework of the "Next Generation EU initiative".

{\bf Conflict of interest}. The author states no conflict of interest.

{\bf Data availability statement}. There are no data associated with this research.

%
%
\bigskip
\bigskip
\bigskip


\begin{thebibliography}{5}



\bibitem{AS1} A. Adriani, A.G. Setti, {\it Inner-outer curvatures, ollivier-ricci curvature and volume growth of graphs}, Proc. Amer. Math. Soc.  {\bf 149} (2021), 4609-4621.

\bibitem{AS2} A. Adriani, A.G. Setti, {\it The $L^1$-Liouville Property on Graphs}, J. Fourier Anal. Appl.,  {\bf 29}, 44 (2023)\,.

\bibitem{BB} C. Bandle, H. Brunner, {\it Blowup in diffusion equations: A survey}, J. Comput. Appl. Math. {\bf 97} (1998), 3--22.

\bibitem{BPT} C. Bandle, M.A. Pozio, A. Tesei, {\it The Fujita exponent for the Cauchy problem in the hyperbolic space,} J. Diff. Eq.
 {\bf 251} \rm (2011), 2143--2163.

\bibitem{BCG} M. Barlow, T. Coulhon, A. Grigor'yan, {\it Manifolds and graphs with slow heat kernel decay},  Invent. Math. {\bf 144} (2001), 609-649\,.

\bibitem{BMP} S. Biagi, G. Meglioli, F. Punzo, {\it A Liouville theorem for  elliptic equations with a potential on infinite graphs}, Calc. Var. Part. Diff. Eq. (to appear)\,.


\bibitem{CGZ} T. Coulhon, A. Grigor'yan, F. Zucca, {\it The discrete integral maximum principle and its applications}, Tohoku J. Math. {\bf 57} (2005), 559-587.


\bibitem{DL}  K. Deng, H.A. Levine, {\em The role of critical exponents in blow-up theorems: the sequel}, J. Math. Anal. Appl. \bf 243 \rm (2000), 85--126.

\bibitem{EM} M. Erbar, J. Maas, {\it Gradient flow structures for discrete porous medium equations}, Discr. Contin. Dyn. Syst. {\bf 34} (2014), 1355-1374.


\bibitem{F} M. Folz, {\it Gaussian upper bounds for heat kernels of continuous time simple random walks}, Electronic Journal of Probability, {\bf 16} (2011), 1693-1755.

\bibitem{F} H. Fujita, {\em On the blowing up of solutions of the Cauchy problem for $u_t=\Delta u+u^{1+\alpha}$}, J. Fac. Sci. Univ. Tokyo Sect. I \textbf{13} (1966), 109--124.



\bibitem{Grig2} A. Grigor'yan, "Introduction to Analysis on Graphs", AMS University Lecture Series {\bf 71} (2018)\,.

\bibitem{GT} A. Grigor'yan, A. Telcs, {\it Sub-Gaussian estimates of heat kernels on infinite graphs}, Duke Math. J. {\bf 109(3)} (2001), 451--510.


\bibitem{MeGP1} G. Grillo, G. Meglioli, F. Punzo, {\it Smoothing effects and infinite time blowup for reaction-diffusion equations: an approach via Sobolev and Poincar\'e inequalities}, J. Math. Pures Appl. {\bf 151 } (2021), 99--131.

\bibitem{MeGP2} G. Grillo, G. Meglioli, F. Punzo, {\it Global existence of solutions and smoothing effects for classes of reaction–diffusion equations on manifolds,} J. Evol. Eq. {\bf 21} (2021), 2339--2375.

\bibitem{MeGP3} G. Grillo, G. Meglioli, F. Punzo, {\it Global existence for reaction-diffusion evolution equations driven by the $ {\text{p}} $-Laplacian on manifolds}, Math. Eng. {\bf 5} (2023), 1-38.

\bibitem{MeGP4} G. Grillo, G. Meglioli, F. Punzo,  {\it Blow-up versus global existence of solutions for reaction–diffusion equations on classes of Riemannian manifolds}, Ann. Mat. Pura Appl. {\bf 202} (2023),  1255-1270.

\bibitem{GMPu} G. Grillo, M. Muratori, F. Punzo, {\it Blow-up and global existence for solutions to the porous medium equation with reaction and slowly decaying density}, J. Diff. Eq. {\bf 269} (2020), 8918-8958.

\bibitem{Sun1} Q. Gu, Y. Sun, J. Xiao, F. Xu, {\it Global positive solution to a semi-linear parabolic equation with potential on Riemannian manifold,} Calc. Var. Partial Differ. Equ. {\bf 59}, 170 (2020).






\bibitem{H} K. Hayakawa, {\em On nonexistence of global solutions of some semilinear parabolic differential equations}, Proc. Japan Acad. \textbf{49} (1973), 503--505.

\bibitem{HJ} B. Hua, J. Jost, {\it $L^q$ harmonic functions on graphs}, Israel J. Math. {\bf 202} (2014), 475-490\,.


\bibitem{HK} B. Hua, M. Keller, {\it Harmonic functions of general graph Laplacians.}  Calc. Var. Part. Diff. Eq., {\bf 51} (2014), 343-362.





\bibitem{HMu} B. Hua, D. Mugnolo, {\it Time regularity and long-time behavior of parabolic p-Laplace equations on infinite graphs}, J. Diff. Eq. {\bf 259} (2015), 6162-6190.


\bibitem{Huang} X. Huang, \emph{On uniqueness class for a heat equation on graphs}, J. Math. Anal. Appl. {\bf 393} (2012), 377--388.


\bibitem{HuangKS} X. Huang, M. Keller, M. Schmidt, {\it On the uniqueness class, stochastic completeness and volume growth for graphs}, Trans. Amer. Math. Soc. {\bf 373} (2020), 8861-8884\,.


\bibitem{KST} K. Kobayashi, T. Sirao, H. Tanaka, {\it On the growing up problem for semilinear heat equations}, J. Math. Soc. Japan {\bf 29} (1977), 407--424.

\bibitem{KLW} M. Keller, D. Lenz, R.K. Wojciechowski,  "Graphs and Discrete Dirichlet Spaces", Springer (2021)\,.














\bibitem{LSZ} D. Lenz, M. Schmidt, I. Zimmermann, {\it Blow-up of nonnegative solutions of an abstract semilinear heat equation with convex source} Calc. Var. {\bf 62}:140 (2023).


\bibitem{Levine} H. A. Levine, {\it The role of critical exponents in blowup theorems}, SIAM Rev. {\bf 32} (1990), 262--288.

\bibitem{LW2} Y. Lin, Y. Wu, {\it The existence and nonexistence of global solutions for a semilinear heat equation on graphs}, Calc. Var. {\bf 56}, 102 (2017).

\bibitem{LWu} Y. Lin, Y. Wu, {\it Blow-up problems for nonlinear parabolic equations on locally finite graphs}, Acta Math. Scientia {\bf 38B}(3) (2018), 843--856.



\bibitem{KLW1} M. Keller, D. Lenz, A. Wojociechowski, \em Volume Growth, Spectrum and Stochastic Completeness of Infinite Graphs \rm, Math. Zeitschrift \bf 274 \rm (2013), 905--932.

\bibitem{MaMoPu} P. Mastrolia, D.D. Monticelli, F. Punzo, {\it Nonexistence of solutions to parabolic differential inequalities with a potential on Riemannian manifolds,} Math. Ann., {\bf 367} (2017), 929--963.


\bibitem{MP2} G. Meglioli, F. Punzo, \emph{Uniqueness in weighted $\ell^p$ spaces  for the Schr\"odinger equation on infinite graphs}, Proc. Amer. Math. Soc. (to appear).
\bibitem{MoPuSo} D. D. Monticelli, F. Punzo, J. Somaglia, {\it Nonexistence results for semilinear elliptic equations on weighted graphs}, arXiv:2306.03609 (2023).

\bibitem{MoPuSo2} D. D. Monticelli, F. Punzo, J. Somaglia, {\it Nonexistence of solutions to parabolic problems with a potential on weighted graphs}, arXiv:2404.12058 (2024).

\bibitem{Mu} D. Mugnolo, {\it Parabolic theory of the discrete p-Laplace operator}, Nonlinear Anal. {\bf 87} (2013), 33-60.

\bibitem{Mu2} D. Mugnolo, "Semigroup Methods for Evolution Equations on Networks", Springer (2016)\,.

\bibitem{Punzo} F. Punzo, {\it Blow-up of solutions to semilinear parabolic equations on Riemannian manifolds with
negative sectional curvature,} J. Math Anal. Appl., \bf 387 \rm (2012), 815--827.

\bibitem{Pu22} F. Punzo, {\it Global solutions of semilinear parabolic equations with drift on Riemannian manifolds}, Discrete Contin. Dyn. Syst., {\bf 42} (2022) 3733--3746.


\bibitem{Wu}  Y. Wu,  {\it On nonexistence of global solutions for a semilinear heat equation on graphs}, Nonlinear Analysis {\bf 171} (2018) 73--84.

\bibitem{Zhang} Q. S. Zhang, {\it Blow-up results for nonlinear parabolic equations on manifolds,} Duke Math. J., {\bf 97}(3) (1999), 515--539.

\end{thebibliography}
%


\end{document}